\newtheorem{theo}{Theorem}[section]
\newtheorem{prop}[theo]{Proposition}
\newtheorem{defi}[theo]{Definition}
\newtheorem{lemm}[theo]{Lemma}
\theoremstyle{definition}
\newcommand{\mb}{\mathbb}
\newcommand{\bb}{\mathbb}
\newcommand{\mc}{\mathcal}
\newcommand{\mf}{\mathfrak}
\newcommand{\ra}{\rightarrow}
\DeclareMathOperator{\cont}{cont}
\DeclareMathOperator{\Map}{Map}
\DeclareMathOperator{\GH}{GH}
\DeclareMathOperator{\Ext}{Ext}
\DeclareMathOperator{\Hom}{Hom}
\DeclareMathOperator{\Spec}{Spec}
\DeclareMathOperator{\Gal}{Gal}
\DeclareMathOperator{\Ind}{Ind}
\DeclareMathOperator{\SG}{SG}
\DeclareMathOperator{\GL}{GL}
\DeclareMathOperator{\SL}{SL}
\DeclareMathOperator{\LT}{LT}
\title{$p$-adic Jacquet-Langlands correspondence and patching}
\date {\today}
\author{Przemys\l aw Chojecki and Erick Knight}
\address{Przemys\l aw Chojecki\\Instytut Matematyczny PAN \\
ul. Sniadeckich 8 \\
00-656 Warszawa \\
Poland}
\email{pchojecki@impan.pl}
\urladdr{http://pchojecki.impan.pl/}
\address{Erick Knight \\ Harvard University \\
Science Center \\ 
USA}
\email{eknight@math.harvard.edu}
\urladdr{http://www.math.harvard.edu/~eknight/}
\begin{document}

\begin{abstract}
We describe two candidates for a local $p$-adic Jacquet-Langlands correspondence and using patching we show that they are in fact isomorphic. We then study locally algebraic vectors of the given correspondence.
\end{abstract}

\maketitle

\addtocontents{toc}{\protect\setcounter{tocdepth}{1}}
\tableofcontents

\section{Introduction}
\noindent Let $F$ be a finite extension of $\mb{Q}_p$. The goal of the local $p$-adic Langlands program is to establish a connection between $n$-dimensional $p$-adic Galois representations of the group $\textrm{Gal}(\bar{F} / F)$ and admissible Banach representations with an action of $\GL_n(F)$. After the initial success of Breuil, Berger, Colmez and others on establishing such a correspondence with desired properties in the case of $n=2$ and $F=\mb{Q}_p$, the progress was hold by different obstacles. The proof of Colmez, purely algebraic in nature, could not be generalized in a straighforward way, due to abundance of automorphic representations for $F \not = \mb{Q}_p$ (\cite{bp}). The proof of Harris and Taylor of the classical local Langlands correspondence used crucially geometrical input.

\medskip

\noindent New geometric methods, suitable for $p$-adic aspects of the Langlands program, became available recently with the rise of perfectoid spaces (\cite{sch3}). In \cite{sch} Scholze gave a construction of certain admissible representations of $D^{\times}$ (a division algebra) attached to admissible $\GL_n$-representations. He used local geometric methods and exploited the perfectoid structure of the Lubin-Tate space at infinity. Considering his work in two-dimensional case, we are able to extract an admissible 2-dimensional $D^{\times}$-representation $J'(\rho)$ attached to a local Galois representation $\rho: \Gal(\bar{\mb{Q}}_p / \mb{Q}_p) \ra \GL_2(E)$, where $E$ is a finite extension of $\mb{Q}_p$. 

\medskip

\noindent On the other hand, one of us (E.K.) in \cite{EKThesis}, gave a different construction for a possible $p$-adic Jacquet-Langlands correspondence, by exploiting Drinfeld tower and Cherednik uniformization of Shimura curves. This allowed him to attach an admissible $D^{\times}$-representation $J(\rho)$ to $\rho$. Our main theorem is an isomorphism of the two constructions. 

\begin{theo} Let $\rho :\Gal(\bar{\mb{Q}}_p / \mb{Q}_p) \ra \GL_2(E)$ be a continuous representation. We assume that the reduction $\bar{\rho}$ is not the sum of two characters, nor an extension of a character by itself, nor an extension of $\chi$ by $\chi \bar{\varepsilon}$ (where $\bar{\varepsilon}$ is the mod $p$ cyclotomic character and $\chi$ is any continous mod $p$ character). We have an isomorphism of $D^{\times}$-representations
$$J'(\rho) \simeq J(\rho)$$
\end{theo}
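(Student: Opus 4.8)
\noindent The plan is to deduce the isomorphism from the Taylor--Wiles--Kisin patching method, in the axiomatised form of Caraiani--Emerton--Gee--Geraghty--Pa\v{s}k\=unas--Shin and of Scholze, by exhibiting both $J(\rho)$ and $J'(\rho)$ as one and the same specialisation of a single patched $D^{\times}$-module.

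First I would globalise $\bar\rho$: using Serre's conjecture (Khare--Wintenberger) together with the usual freedom in prescribing local conditions, choose a quaternion algebra $B/\mb{Q}$, split at $\infty$ and ramified exactly at $p$ and one auxiliary prime $\ell$, so that $B\otimes_{\mb{Q}}\mb{Q}_p\cong D$, and a modular residual representation $\bar r:\Gal(\bar{\mb{Q}}/\mb{Q})\ra\GL_2(\mb{F})$ with $\bar r|_{\Gal(\bar{\mb{Q}}_p/\mb{Q}_p)}\cong\bar\rho$, odd, of large image, Steinberg (or minimally ramified) at $\ell$ and unramified elsewhere, satisfying the Taylor--Wiles hypotheses. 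The three exclusions on $\bar\rho$ in the statement are exactly what makes such an $\bar r$ available, the global deformation problem formally smooth of the expected dimension, and the framed local deformation ring at $p$ together with Colmez's Montr\'eal functor well behaved in the sense of Pa\v{s}k\=unas (the residual representation lying in a ``nice'' block).

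Next I would realise the two candidates inside completed cohomology. Write $\Pi(\rho)$ for the $p$-adic Langlands representation of $\GL_2(\mb{Q}_p)$ attached to $\rho$, so that $J'(\rho)=\mc S(\Pi(\rho))$ up to normalisation, $\mc S$ being Scholze's functor; by Emerton's local--global compatibility the $\bar r$-part of $\widetilde H^{1}$ of the modular curve tower is $\Pi(\rho)$ tensored with a smooth away-from-$p$ factor, with controlled multiplicity. On the quaternionic side, Cherednik--Drinfeld uniformisation presents the Shimura curves for $B$ with level at $p$ as quotients of the Drinfeld tower, and Knight's construction extracts $J(\rho)$ from the $\bar r$-part of $\widetilde H^{1}$ of this Shimura curve tower by the analogue of Emerton's compatibility. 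The bridge between the two sides is the mod-$p$ (and $p$-adic) Jacquet--Langlands isomorphism $\mc S\bigl(\widetilde H^{1}(\text{modular curves})\bigr)\cong\widetilde H^{1}(\text{Shimura curves})$ of $D^{\times}\times(\text{prime-to-}p\text{ Hecke})$-modules, which follows from Scholze's computation of $\mc S$ on the cohomology of the Lubin--Tate tower (underlain by the Faltings--Fargues identification of the Lubin--Tate and Drinfeld towers); since $\mc S$ is exact on the relevant admissible representations and commutes with the limits and localisations in play, this comparison passes to the patched setting.

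Then I would patch the completed cohomology of the Shimura curve tower, with growing level at a Taylor--Wiles set of primes, to get a module $M_\infty$ over $R_\infty=R_\infty^{\mathrm{loc}}[[x_1,\dots,x_g]]$ carrying a $D^{\times}$-action, $R_\infty^{\mathrm{loc}}$ being the completed tensor product of the relevant local lifting rings. The heart of the matter --- and the step I expect to be hardest --- is a multiplicity-one statement: $M_\infty$ must be shown to be, up to the appropriate twist, the unique $R_\infty[D^{\times}]$-module of its kind, flat over $R_\infty$ with the expected fibres, which I would obtain by transporting Pa\v{s}k\=unas's structure theory of the $\GL_2(\mb{Q}_p)$ patched module through Scholze's functor and the Jacquet--Langlands comparison above; here once more the hypotheses on $\bar\rho$ are precisely what forces the local deformation ring at $p$ to be regular and the fibre multiplicities to equal one. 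Granting this, both $J(\rho)$ and $J'(\rho)$ are computed as the specialisation $M_\infty\otimes_{R_\infty}\kappa(x_\rho)$ at the point $x_\rho\in\Spec R_\infty$ determined by $\rho$ --- the first directly from Knight's local--global compatibility, the second from Emerton's together with the Jacquet--Langlands comparison and the exactness of $\mc S$ --- whence $J'(\rho)\simeq J(\rho)$. Finally I would check that the $D^{\times}$-actions, central characters, and auxiliary prime-to-$p$ data agree on the nose, upgrading the abstract isomorphism to one of $D^{\times}$-representations as claimed.
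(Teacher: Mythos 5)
Your overall strategy --- globalize $\bar\rho$, patch, and compare the two constructions through their respective local--global compatibilities --- is the same as the paper's, but two of your steps are genuinely problematic. First, the globalization: Khare--Wintenberger gives modularity of odd global representations, but it does not produce a global $\bar r$ with \emph{prescribed} restriction $\bar r|_{G_{\mb{Q}_p}}\cong\bar\rho$; that is a separate (and over $\mb{Q}$, generally unavailable) globalization problem. The paper invokes Corollary A.3 of \cite{gk}, which produces such an $\bar r$ only over a possibly large CM field, and this is precisely why the whole argument is run with unitary groups $GG$, $\overline{GG}$ over a CM field $F'$ (with $D^\times$ at one place above $p$, $\GL_2(\mb{Q}_p)$ at the others) rather than with modular curves and Shimura curves over $\mb{Q}$. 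Relatedly, the input to Scholze's functor in his Theorem 6.2 is the completed $H^0$ of the \emph{definite} form $\overline{GG}$ (the space $\pi(\overline{K}^p)=\hat H^0_{E,\overline{GG}}(\overline{K}^p)$), not the completed $H^1$ of modular curves as in your ``bridge''; the three hypotheses on $\bar\rho$ are there to make the local--global compatibility statements (Theorem \ref{theo-lg1}, Proposition \ref{prop-lcbar}, Theorem \ref{theo-lg2}) hold, not to make the globalization available.

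Second, and more seriously, the step you single out as the heart of the matter --- that $M_\infty$ is flat over $R_\infty$ and is the \emph{unique} $R_\infty[D^\times]$-module of its kind --- is both unavailable and unnecessary. Patched modules are not flat over $R_\infty$ in general (they are maximal Cohen--Macaulay over their support), and there is no axiomatic characterization of a patched module on the $D^\times$-side that would make ``unique of its kind'' meaningful; proving one would be at least as hard as the theorem itself. The paper's mechanism is softer and avoids any structural input on $M_\infty$: Theorems \ref{theo-lg1} and \ref{theo-lg2} compute the \emph{same} space $\mathrm{Hom}_{G_{F}}(\rho, \hat{H}^1_{GG,E}(K^p))$ as $J(\pi_n(I)_v)\otimes_i\pi_n(I)_{v_i}\otimes AF_{\ell\neq p}$ and as $J'(\pi_n(I)_v)\otimes_i\pi_n(I)_{v_i}\otimes AF_{\ell\neq p}$, with identical auxiliary factors, so $J=J'$ already on each finite-level module $\pi_n(I)_v$ of the patching construction; one then passes to the ultraproduct and the inverse limit using that both functors commute with limits. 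Note also that $J(\rho)$ is not extracted from $M_\infty$ by the specialisation $M_\infty\otimes_{R_\infty}\kappa(x_\rho)$ (the functors involve $\mathrm{Hom}_{G_{\mb{Q}_p}}(\rho,-)$, not a tensor product); the reduction from the patched module to an individual $\rho$ is done by observing that it suffices to prove the statement for $V=B(\rho_A)$ over $A=R_{\overline{\rho}_p}[[t_1,\ldots,t_n]]$.
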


\noindent This shows that $J(-)$ is the natural candidate for the $p$-adic Jacquet-Langlands correspondence: it is functorial, compatible with local and global constructions and satisfies local-global compatibility.

\medskip

\noindent We prove it using patching (\cite{ceggps1}) and thus our proof is global in nature even though we start with local objects. By using patching and local-global compatibility results we are able to reduce the proof to easy cases when the isomorphism is clear.

\medskip

\noindent Another main input of this paper is the analysis of the locally algebraic vectors of $J(\rho)$ in certain cases. If $\rho : G_{\bb{Q}_p} \rightarrow \mathrm{GL}_2(E)$ is a continuous representation, then define $BS_{D^\times}(\rho)$ as follows: if $\rho$ is not potentially semistable with distinct Hodge-Tate weights, then $BS_{D^\times}(\rho) = 0$.  Otherwise, associated to $\rho$ are Hodge-Tate weights $w_1 < w_2$ and a Weil-Deligne representation $WD(\rho)$.  If the Frobenius-semisimplification of $WD(\rho)$ is the sum of two characters, then, again, define $BS_{D^\times}(\rho) = 0$.  In the final case, we let $Sm_{\rho}$ be the representation of $D^\times$ associated to $WD(\rho)^{F-ss}$ and $Alg_{\rho}$ be the algebraic representation of $D^\times$ with weights $-w_2$ and $-w_1 - 1$.  Then we may define $BS_{D^\times}(\rho) = Sm_{\rho} \otimes Alg_{\rho}$.

\begin{theo}
We have $J(\rho)^{alg} = BS_{D^\times}(\rho)$.
\end{theo}

\noindent We again prove this theorem using patching, which allows us to reduce the statement to globally arising representations.

\subsection{Notations}

We let $F/\mb{Q}_p$ be a finite extension with ring of integers $\mc{O}$ and a uniformiser $\varpi \in \mc{O}$. We identify the residue field with $\mb{F}_q$. Fix the algebraic closure $\bar{\mb{F}}_q$ and define $\breve{F} = F \otimes _{W(\mb{F}_q)} W(\bar{\mb{F}}_q)$ be the completion of the unramified extension of $F$ with residue field $\bar{\mb{F}}_p$. Let $\breve{\mc{O}} \subset \breve{F}$ be its ring of integers.

\medskip

\noindent Let $E/\mathbb{Q}_p$ be a finite extension with ring of integers $\mathcal{O}_E$, uniformizer $\varpi_E$, and residue field $k_E$. This will be our coefficient field.

\medskip

\noindent We will denote by $G_{\mb{Q}_p}$ the absolute Galois group $\Gal (\bar{\mb{Q}}_p / \mb{Q}_p)$ and similarly $G_{F_v} = \Gal (\bar{F}_v / F_v)$ for any finite extension $F_v / \mb{Q}_p$.

\medskip

\noindent We let $\Delta/\mathbb{Q}$ be a quaternion algebra that is split at $\infty$ and nonsplit at $p$. Put $G = \Delta^{\times}$; this is an algebraic group over $\mathbb{Q}$. We also let $D/\bb{Q}_p = \Delta(\bb{Q}_p)$ be the division algebra over $\bb{Q}_p$.

\medskip

\noindent We need to consider general rings as coefficient rings for our admissible and smooth representations, in order to apply results to patched modules. Following Emerton we have

\begin{defi}
Let $(A,\mf m)$ be a complete noetherian local ring with finite residue field of characteristic $p$ and $G$ be a $p$-adic analytic group. An $A[G]$-module $V$ is called smooth if for all $v \in V$ there is some open subgroup $H \subset G$ and $i \geq 1$ such that $v$ is $H$-invariant and $\mf m ^i v = 0$.

\medskip

\noindent A smooth $A[G]$-module $V$ is admissible if for all $i \geq 1$ and $H \subset G$ open, the $A/ \mf m ^i$ module $V^H[\mf m ^i]$ is finitely generated (equivalently, of finite length). 
\end{defi} 

\noindent By Proposition 2.2.13 of \cite{em2} the category of admissible $A[G]$-modules is abelian and it is a Serre subcategory of the category of smooth $A[G]$-modules.

\medskip

\noindent We recall that the $p$-adic local Langlands correspondence (in the formulation of Paskunas, cf. \cite{Pas}) associates to any admissible $A[\GL_2(\mb{Q}_p)]$-module $V$, a continuous $A[G_{\mb{Q}_p}]$-module $\rho _V$ of rank 2. Vice-versa, to any continuous $A[G_{\mb{Q}_p}]$-module $\rho$ of rank 2, we can associate an admissible $A[\GL_2(\mb{Q}_p)]$-module $B(V)$.



\section{Two constructions}

\subsection{First construction} Let us summarize the main results of \cite{EKThesis}.

\medskip

Let $\Omega_{\bb{Q}_p}/ \breve{\mb{Q}}_p$ be Drinfel'd's upper half plane.  There are covers $\Sigma^n/\Omega_{\bb{Q}_p}$ which are $\mathcal{O}_D^\times / (1 + \varpi_D^n \mathcal{O}_D)$-torsors.  The group $\mathrm{GL}_2(\bb{Q}_p)$ acts on $\Omega_{\bb{Q}_p}$, and it is possible to twist the action by $\phi^{\nu_p(\mathrm{det}(g))}$ so that the action of $\mathrm{GL}_2(\bb{Q}_p)$ lifts to the covers $\Sigma^n$.  There are natural commuting actions of $\mathcal{O}_D^\times$ and $I_{\bb{Q}_p}$ on the inverse limit as well, and these actions commute with the action of $\mathrm{GL}_2(\bb{Q}_p)$.  Finally, one may extend these actions by using the right power of Frobenius again so that one gets an action of $D^\times \times \mathrm{GL}_2(\bb{Q}_p) \times G_{\bb{Q}_p}$ on the tower $\Sigma^n$.

\begin{defi}
Let $(A,\mf m)$ be any complete noetherian local ring with finite residue field of characteristic $p$ and the fraction field $K$. Let $\hat{H}^i_{A}(\Sigma)$ be the $\mf m$-adic completion of $$\lim_{\longleftarrow \atop s} \lim_{\longrightarrow \atop n} H^i_{\acute{e}t}((\mathrm{Res}^{\hat{\bb{Q}}_p}_{\bb{Q}_p} \Sigma^n) \times_{\bb{Q}_p} \bb{C}_p, A/\mf m ^s A).$$  Additionally, let $\hat{H}^1_K(\Sigma) = \hat{H}^1_{A}(\Sigma) \otimes_{A} K$.
\end{defi}

All of the discussion about the tower $\Sigma^n$ is encoded in terms of properties of the space $\hat{H}^1_A(\Sigma)$ by saying that there are commuting unitary actions of $\mathrm{GL}_2(\bb{Q}_p)$, $D^\times$, and $G_{\bb{Q}_p}$ on $\hat{H}^1_A(\Sigma)$.

\medskip

Let $V$ be an admissible $A[\GL_2(\mb{Q}_p)]$-module. We can associate to it via the $p$-adic local Langlands correspondence a continuous $A[G_{\mb{Q}_p}]$-module $\rho _V$ of rank 2. We define
$$J(V) = \Hom _{G _{\mb{Q}_p}}\left(\rho _V, (\hat{H}^1_A(\Sigma) \widehat{\otimes} _{A} V)^{\GL_2(\mb{Q}_p)}\right)$$
This is a functor from the category of admissible smooth $A[\GL_2(\mb{Q}_p)]$-modules to the category of $A[D^{\times}]$-modules. Alternatively we can view it as a functor on the category of continuous $A[G_{\mb{Q}_p}]$-modules via
$$J(\rho) = \Hom _{G _{\mb{Q}_p}}\left(\rho, (\hat{H}^1_A(\Sigma) \widehat{\otimes} _{A} B(\rho))^{\GL_2(\mb{Q}_p)}\right)$$

\subsection{Second construction.} We now review the construction in \cite{sch}.

\medskip

One has the Lubin-Tate tower $(\mc{M}_{\LT,K}) _{K \subset \GL_n(F)}$ which is a tower of smooth rigid-analytic varieties $\mc{M} _{\LT,K}$ over $\breve{F}$ parametrized by compact open subgroups $K$ of $\GL_n(F)$ with finite \'etale transition maps. There is a compatible continuous action of $D^{\times}$ on all $\mc{M} _{\LT, K}$ and an action of $\GL_n(F)$ on the tower: each $g\in \GL_n(F)$ induces an isomorphism between $\mc{M} _{\LT,K}$ and $\mc{M} _{\LT, g^{-1}Kg}$.

\medskip

There is a map called the Gross-Hopkins period map
$$ \pi _{\GH} : \mc{M} _{\LT, K} \ra \mb{P} ^{n-1} _{\breve{F}}$$
compatible for varying $K$. It is an \'etale covering of rigid-analytic varieties with fibres being $\GL_n(F)/K$. Moreover it is $D^{\times}$-equivariant and there is a Weil descent datum on $\mc{M}_{\LT,K}$ under which $\pi _{\GH}$ is equivariant.

\medskip

Let us denote by $\mc{M}_{\LT, \infty}$ the perfectoid space over $\breve{F}$ constructed by Scholze (cf. \cite{sch}), which is the inverse limit in the adic setting, i.e.
$$\mc{M} _{\LT, \infty} \sim \varprojlim _K \mc{M}_{\LT, K}$$
We also have the Gross-Hopkins period map $\pi _{\GH} : \mc{M} _{\LT, \infty} \ra \mb{P} ^{n-1} _{\breve{F}}$, which can be viewed as a $\GL_n(F)$-torsor. We use this map to define sheaves associated to admissible representations.

\medskip

Let $\pi$ be an admissible $\mb{F}_p$-representation $\pi$ of $\GL_n(F)$. To each $D^{\times}$-equivariant \'etale map $U \ra \mb{P}^{n-1} _{\breve{F}}$ we can associate the $\mb{F}_p$-vector space
$$\Map _{\cont, \GL_n(F) \times D^{\times}} (|U \times _{\mb{P}^{n-1} _{\breve{F}}} \mc{M}_{\LT,\infty}|, \pi)$$
of continuous $\GL_n(F) \times D^{\times}$-equivariant maps.

\begin{prop}[\cite{sch}, Proposition 3.1] This association defines a Weil-equivariant sheaf $\mc{F}_{\pi}$ on $(\mb{P}^{n-1} _{\breve{F}} / D^{\times})_{et}$. The association $\pi \mapsto \mc{F} _{\pi}$ is exact and all geometric fibres of $\mc{F}_{\pi}$ are isomorphic to $\pi$.  
\end{prop}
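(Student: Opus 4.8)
The statement bundles together three assertions --- that $\mc{F}_\pi$ is a well-defined sheaf on $(\mb{P}^{n-1}_{\breve{F}}/D^\times)_{et}$, that $\pi\mapsto\mc{F}_\pi$ is exact, and that the geometric fibres of $\mc{F}_\pi$ are all isomorphic to $\pi$ --- and the plan is to establish the sheaf property and the geometric-fibre computation directly, and then deduce exactness from the latter. Throughout, the crucial geometric inputs are that $\mc{M}_{\LT,\infty}$ is a perfectoid space over $\breve{F}$ and that $\pi_{\GH}$ realises it as a pro-\'etale $\GL_n(F)$-torsor over $\mb{P}^{n-1}_{\breve{F}}$, equivariantly for the commuting $D^\times$-action and the Weil descent datum.

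\emph{Sheaf property.} First I would check that $U\mapsto |U\times_{\mb{P}^{n-1}_{\breve{F}}}\mc{M}_{\LT,\infty}|$ is a well-behaved functor from $(\mb{P}^{n-1}_{\breve{F}}/D^\times)_{et}$ to topological spaces with $\GL_n(F)\times D^\times$-action: since any \'etale $U\to\mb{P}^{n-1}_{\breve{F}}$ is locally finite \'etale and finite \'etale algebras over perfectoid spaces are perfectoid, the fibre product is again (locally) perfectoid, so its underlying space is defined and functorial in $U$, with the two actions pulling back compatibly. For an \'etale covering $\{U_i\to U\}$, base change along $\mc{M}_{\LT,\infty}$ gives a jointly surjective family of \'etale --- hence open --- maps $|U_i\times\mc{M}_{\LT,\infty}|\to|U\times\mc{M}_{\LT,\infty}|$, so their disjoint union is an open surjection, i.e. a topological quotient map, and the same holds for the double fibre products. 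Because $\pi$ is smooth, hence discrete, the presheaf $U\mapsto\Map_{\cont,\GL_n(F)\times D^\times}(|U\times\mc{M}_{\LT,\infty}|,\pi)$ is then a sheaf by the standard gluing of continuous equivariant maps along a quotient cover; the Weil-equivariance is inherited from the equivariant Weil descent datum on the tower.

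\emph{Geometric fibres.} For a geometric point $\bar{x}$ of $\mb{P}^{n-1}_{\breve{F}}$ I would compute the stalk $\mc{F}_{\pi,\bar{x}}=\varinjlim_{(U,\bar{u})}\mc{F}_\pi(U)$ over $D^\times$-equivariant \'etale neighbourhoods. Since $\pi_{\GH}$ is a pro-\'etale $\GL_n(F)$-torsor, $|U\times_{\mb{P}^{n-1}_{\breve{F}}}\mc{M}_{\LT,\infty}|$ is, compatibly in $U$, a topological $\GL_n(F)$-torsor over $|U|$, and its fibre over $\bar{x}$ is $\varprojlim_K\GL_n(F)/K\cong\GL_n(F)$ as a $\GL_n(F)$-space. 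Using that $\pi=\varinjlim_K\pi^K$ is a filtered colimit of finite sets while the $|U\times_{\mb{P}^{n-1}_{\breve{F}}}\mc{M}_{\LT,K}|$ are quasi-compact, a continuous $\GL_n(F)$-equivariant map into $\pi$ factors through some finite level, and evaluation at the identity coset of the fibre then identifies $\mc{F}_{\pi,\bar{x}}$ with $\pi$; the $D^\times$-action is absorbed by shrinking $U$ so as to trivialise the torsor near $\bar{x}$ compatibly with the $D^\times$-action on the tower. This identification is manifestly functorial in $\pi$.

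\emph{Exactness.} Given a short exact sequence $0\to\pi'\to\pi\to\pi''\to0$ of admissible smooth $\mb{F}_p$-representations of $\GL_n(F)$, left-exactness of $\pi\mapsto\mc{F}_\pi$ is immediate from the Hom-type description, and for the remaining surjectivity I would argue on stalks: the \'etale topos of $(\mb{P}^{n-1}_{\breve{F}}/D^\times)_{et}$ has enough points coming from geometric points of $\mb{P}^{n-1}_{\breve{F}}$, and by the previous step the stalk functor at $\bar{x}$ carries $\mc{F}_\bullet$ to the identity functor on representations, so $\mc{F}_{\pi'}\to\mc{F}_\pi\to\mc{F}_{\pi''}\to0$ is exact because $\pi'\to\pi\to\pi''\to0$ is. The hard part will be the geometric-fibre step: controlling the underlying space of the perfectoid fibre products and of the fibre of $\mc{M}_{\LT,\infty}$ over a geometric point, and, above all, disentangling the $D^\times$-action on $\mb{P}^{n-1}_{\breve{F}}$ --- which is very far from factoring through a finite quotient --- from the \'etale localisation, so that $D^\times$-equivariant \'etale neighbourhoods of $\bar{x}$ are cofinal enough to run the colimit; the sheaf axiom and exactness are comparatively formal once this is in place.
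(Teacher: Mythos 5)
This proposition is not proved in the paper at all: it is quoted from \cite{sch} (Proposition 3.1), so there is no internal argument to compare yours against, and the relevant benchmark is Scholze's proof. Your outline follows the same strategy as that source: topological descent along the open surjections $\bigsqcup_i |U_i\times_{\mb{P}^{n-1}_{\breve{F}}}\mc{M}_{\LT,\infty}|\to |U\times_{\mb{P}^{n-1}_{\breve{F}}}\mc{M}_{\LT,\infty}|$ for the sheaf axiom, the torsor property of $\pi_{\GH}$ plus smoothness of $\pi$ for the fibre computation, and exactness checked on stalks.

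The genuine gap is exactly the step you flag and then defer, and it is the load-bearing one: the identification of the stalk of $\mc{F}_\pi$ at a geometric point with $\Map_{\cont,\GL_n(F)}\bigl(\varprojlim_K \GL_n(F)/K,\ \pi\bigr)\cong\pi$. Two things are missing. First, you must explain how a geometric point $\bar{x}$ of $\mb{P}^{n-1}_C$ gives a fibre functor on the quotient site $(\mb{P}^{n-1}_{\breve{F}}/D^\times)_{et}$ whose value is computed by the colimit you write down (e.g.\ by first pulling back along the projection to the ordinary \'etale site of $\mb{P}^{n-1}_C$); your phrase about the $D^\times$-action being ``absorbed by shrinking $U$ so as to trivialise the torsor near $\bar{x}$'' is not an argument, since the $D^\times$-action on $\mb{P}^{n-1}_{\breve{F}}$ does not factor through a finite quotient and no equivariant trivialisation exists in general. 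Second, you need the actual spreading-out statement: every continuous equivariant map from the fibre to the discrete set $\pi$ extends to a section over some neighbourhood, and two sections agreeing on the fibre agree on a neighbourhood; this is where the spectrality/quasicompactness of the spaces $|U\times_{\mb{P}^{n-1}_{\breve{F}}}\mc{M}_{\LT,K}|$ and the presentation of the fibre as a cofiltered limit of them must really be used, not merely invoked. Note also that your exactness argument inherits this dependence, since checking exactness on stalks requires these points to form a conservative family for the quotient topos with stalks computed as above. A minor point: finiteness of $\pi^K$ (admissibility) is not needed anywhere --- a continuous map from a quasicompact space to a discrete set has finite image, so smoothness of $\pi$ suffices, which is the right generality because exactness is an assertion about the functor on all smooth representations.
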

   
The cohomology groups of $\mc{F}_{\pi}$ provide a good source of admissible representations. Let $C$ be an algebraically closed and complete extension of $F$.

\begin{prop}[\cite{sch}, Corollary 3.14] For any admissible smooth representation $\pi$ of $\GL_n(F)$ the cohomology group $H^i_{et}(\mb{P}^{n-1}_C, \mb{F}_{\pi})$ is an admissible $D^{\times}$-representation invariant under change of $C$.
\end{prop}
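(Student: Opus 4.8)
This is Corollary~3.14 of \cite{sch}, and the plan is to follow the argument there. The geometric input is the Gross--Hopkins period map $\pi_{\GH}:\mc{M}_{\LT,\infty}\to\mb{P}^{n-1}_{\breve F}$, which is a pro-\'etale $\GL_n(F)$-torsor and along which $\mc{F}_{\pi}$ pulls back, essentially by construction, to the constant sheaf $\underline{\pi}$ on the perfectoid space $\mc{M}_{\LT,\infty}$. This produces a Cartan--Leray (Hochschild--Serre) spectral sequence
$$E_2^{s,t}=H^s_{\cont}\!\left(\GL_n(F),\ H^t_{et}(\mc{M}_{\LT,\infty,C},\mb{F}_p)\otimes_{\mb{F}_p}\pi\right)\ \Longrightarrow\ H^{s+t}_{et}(\mb{P}^{n-1}_C,\mc{F}_{\pi}),$$
equivariant for the commuting actions of $D^{\times}$ and of the Weil group $W_F$. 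The target is then studied through this spectral sequence together with d\'evissage: by the preceding Proposition the functor $\pi\mapsto\mc{F}_{\pi}$ is exact and commutes with filtered colimits, so, presenting an admissible $\pi$ as a quotient of a direct sum of compact inductions $c\text{-}\Ind_K^{\GL_n(F)}W$ with $W$ finite-dimensional --- for which $\mc{F}_{c\text{-}\Ind_K^{\GL_n(F)}W}$ is the extension by zero along the \'etale map $\mc{M}_{\LT,K}\to\mb{P}^{n-1}$ of the finite local system $\mc{L}_W$ attached to $W$, so that, $\mb{P}^{n-1}_C$ being proper, $H^i_{et}(\mb{P}^{n-1}_C,\mc{F}_{c\text{-}\Ind_K^{\GL_n(F)}W})\cong H^i_{et,c}(\mc{M}_{\LT,K,C},\mc{L}_W)$ --- one expresses everything in terms of the mod-$p$ \'etale cohomology of the finite and infinite levels of the Lubin--Tate tower.

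The crux --- and the real content of \cite{sch}, \S3 --- is that, even though the mod-$p$ \'etale cohomology of a non-proper rigid space is in general infinite-dimensional, the $\GL_n(F)$-cohomology occurring in the Cartan--Leray sequence, combined with the properness of $\mb{P}^{n-1}$, collapses $H^i_{et}(\mb{P}^{n-1}_C,\mc{F}_{\pi})$ to a smooth $D^{\times}$-representation whose invariants under every compact open $K'\subset D^{\times}$ are finite-dimensional. The same comparison-theoretic input --- Scholze's finiteness and base-change results for torsion \'etale cohomology, resting on the perfectoid structure of $\mc{M}_{\LT,\infty}$ and almost mathematics --- shows that this cohomology does not change when the algebraically closed complete base $C$ is enlarged, since $\mb{P}^{n-1}$ and the spaces $\mc{M}_{\LT,K}$ are defined over $\breve F$. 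Finally, admissible $\mb{F}_p[D^{\times}]$-modules form a Serre subcategory of the smooth ones (Proposition~2.2.13 of \cite{em2}), so the spectral sequence and d\'evissage exhibit $H^i_{et}(\mb{P}^{n-1}_C,\mc{F}_{\pi})$ as an admissible $D^{\times}$-representation.

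I expect the main obstacle to be precisely this finiteness over $D^{\times}$. Without properness one has no control on mod-$p$ \'etale cohomology --- already $H^1_{et}(\mb{A}^1_C,\mb{F}_p)$ is infinite-dimensional, by Artin--Schreier --- so the classical $\ell\neq p$ finiteness theorems for the Lubin--Tate tower cannot be used directly, and one genuinely needs the perfectoid tower at infinite level and the properness of $\mb{P}^{n-1}$ to see that the cohomology of $\mc{F}_{\pi}$ is admissible and independent of $C$. A secondary technical point is verifying the sheaf-theoretic identifications used above --- $\pi_{\GH}^{*}\mc{F}_{\pi}\cong\underline{\pi}$ and $\mc{F}_{c\text{-}\Ind_K^{\GL_n(F)}W}\cong(\pi_{\GH,K})_{!}\mc{L}_W$ --- compatibly with the $D^{\times}$-action and the Weil descent datum; these, and the collapsing argument, are carried out in detail in \cite{sch}.
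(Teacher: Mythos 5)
The paper offers no argument here beyond the citation, so deferring to \cite{sch}, Corollary 3.14 is in itself legitimate, and you correctly identify the real obstacle (no finiteness for mod-$p$ \'etale cohomology of non-proper rigid spaces). The difficulty is that the argument you sketch as being ``the argument there'' is not Scholze's argument, and as written it does not close: both of your reduction steps land exactly on the objects you yourself flag as uncontrollable. The Cartan--Leray $E_2$-page you write down involves $H^t_{et}(\mc{M}_{\LT,\infty,C},\mb{F}_p)$ and continuous cohomology of the non-compact group $\GL_n(F)$ with smooth infinite-dimensional coefficients; neither is finite, admissible, nor computable mod $p$, and the assertion that the sequence ``collapses'' the abutment to an admissible $D^\times$-representation is precisely the statement to be proved, not a consequence of properness of $\mb{P}^{n-1}$. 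The d\'evissage has the same defect: $c\text{-}\Ind_K^{\GL_n(F)}W$ is not admissible, so presenting $\pi$ by such objects leaves the Serre subcategory of admissible representations (the kernel of the presentation is not admissible either), and the groups $H^i_{et,c}(\mc{M}_{\LT,K,C},\mc{L}_W)$ you would then need to control are infinite-dimensional mod $p$ --- the Artin--Schreier phenomenon you cite. Your reduction thus converts the proposition into finiteness claims that are false or unknown.

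Scholze's actual mechanism never passes through the mod-$p$ cohomology of the open spaces $\mc{M}_{\LT,K}$ with constant coefficients, nor through $\GL_n(F)$-cohomology. It runs through the primitive comparison theorem on the proper space $\mb{P}^{n-1}_C$: one shows that $H^i_{et}(\mb{P}^{n-1}_C,\mc{F}_\pi)\otimes_{\mb{F}_p}\mc{O}_C/p$ is almost isomorphic to $H^i_{et}(\mb{P}^{n-1}_C,\mc{F}_\pi\otimes\mc{O}^+/p)$, and the latter is controlled by a \emph{finite} affinoid cover of $\mb{P}^{n-1}$ whose preimages in $\mc{M}_{\LT,\infty}$ are affinoid perfectoid. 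Admissibility of $\pi$ enters only through the finiteness of the spaces $\pi^K$ appearing in the terms of the resulting \v{C}ech complex, and almost vanishing of higher $\mc{O}^+/p$-cohomology on affinoid perfectoids disposes of the rest; admissibility over $D^\times$ and independence of $C$ are then read off from this complex (this is Theorem 3.13 of \cite{sch}, of which the statement here is the corollary). If you intend to reproduce the proof rather than merely cite it, that is the argument you need to reproduce; the spectral sequence and compact-induction d\'evissage you propose should be dropped.
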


If $(A,\mf m)$ is any complete noetherian local ring with finite residue field of characteristic $p$ and $V$ is an admissible $A[\GL_n(F)]$-module, then we can attach to it $\mc{F}_V$ as before, getting a sheaf on $(\mb{P}^{n-1}_C / D^{\times})_{et}$. 

\begin{prop}[\cite{sch}, Theorem 4.4]
For all $i\geq 0$, the $D^{\times}$-representation $H^i(\mb{P}^{n-1}_C, \mc{F} _V)$ is admissible, independent of $C$ and vanishes for $i > 2(n-1)$.
\end{prop}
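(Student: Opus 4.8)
The plan is to bootstrap from the mod $p$ statement, which is \cite[Corollary 3.14]{sch} (cited above), by an $\mf m$-adic d\'evissage, and then to read off the vanishing range from a cohomological-dimension bound for the adic space $\mb{P}^{n-1}_C$. For the d\'evissage I would argue in three stages. First, if $W$ is an admissible $k'[\GL_n(F)]$-module for a finite extension $k'/\mb{F}_p$, then forgetting the $k'$-structure makes $W$ an admissible $\mb{F}_p[\GL_n(F)]$-module with $\mc{F}_W$ unchanged, so $H^i_{et}(\mb{P}^{n-1}_C, \mc{F}_W)$ is an admissible $D^\times$-representation over $\mb{F}_p$ --- hence over $k'$, finiteness over the two fields being equivalent --- and is independent of $C$. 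Second, if $V$ is an admissible $A[\GL_n(F)]$-module killed by $\mf m^s$, it carries a finite filtration by the submodules $\mf m^a V$ whose graded pieces are admissible $(A/\mf m)[\GL_n(F)]$-modules; since $\mc{F}_{(-)}$ is exact (\cite[Proposition 3.1]{sch}), the associated long exact cohomology sequences, together with the fact that admissible $D^\times$-representations form a Serre subcategory, show that $H^i_{et}(\mb{P}^{n-1}_C, \mc{F}_V)$ is admissible over $A/\mf m^s$, hence over $A$, and independent of $C$. Third, for general $V$ I would use $V = \varinjlim_s V[\mf m^s]$: as $\mb{P}^{n-1}_C$ is quasi-compact and quasi-separated and $\mc{F}_{(-)}$ commutes with filtered colimits, one gets $H^i_{et}(\mb{P}^{n-1}_C, \mc{F}_V) = \varinjlim_s H^i_{et}(\mb{P}^{n-1}_C, \mc{F}_{V[\mf m^s]})$, a filtered colimit of admissible and $C$-independent $D^\times$-representations.

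For the vanishing I would invoke a bound of Huber: for a quasi-compact, quasi-separated adic space $X$ of dimension $d$ over an algebraically closed non-archimedean field, the \'etale cohomological dimension for torsion abelian sheaves is at most $2d$. Applying this to $X = \mb{P}^{n-1}_C$, which is proper of dimension $n-1$, and to the $p$-power-torsion sheaf $\mc{F}_V$ gives $H^i_{et}(\mb{P}^{n-1}_C, \mc{F}_V) = 0$ for $i > 2(n-1)$.

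I expect the main obstacle to be the last step of the d\'evissage: a filtered colimit of admissible $A[D^\times]$-modules need not be admissible, so one must check that for every open compact subgroup $H \subset D^\times$, every $i \geq 0$ and every $j \geq 1$, the $A/\mf m^j$-module $H^i_{et}(\mb{P}^{n-1}_C, \mc{F}_V)^H[\mf m^j]$ is finitely generated. Writing $M_s = H^i_{et}(\mb{P}^{n-1}_C, \mc{F}_{V[\mf m^s]})$, one has $(\varinjlim_s M_s)^H[\mf m^j] = \varinjlim_s (M_s^H[\mf m^j])$, since the trivial representation is finitely presented among smooth $H$-representations ($H$ being compact), so $(-)^H$ commutes with filtered colimits, as does the finite limit $[\mf m^j]$; the task is then to bound this system uniformly in $s$. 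For this one feeds back the admissibility of $V$ itself --- for instance $V^K[\mf m^j]$ is already independent of the truncation level once $s \geq j$ --- and uses once more the exactness of $\mc{F}_{(-)}$ to propagate such bounds through the long exact sequences relating consecutive truncations $V[\mf m^s] \subset V[\mf m^{s+1}]$. A secondary point deserving care is that the cohomological-dimension bound for $p$-power-torsion coefficients, when the residue characteristic equals $p$, genuinely requires the properness of $\mb{P}^{n-1}_C$, which fortunately holds.
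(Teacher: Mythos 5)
First, a point of comparison: the paper offers no proof of this statement at all --- it is quoted verbatim from \cite{sch}, Theorem 4.4 --- so the only meaningful benchmark is Scholze's own argument, whose architecture (d\'evissage to the residue-field case of Corollary 3.14, then a cohomological-dimension bound) your proposal correctly reproduces. Your first two stages are sound: extension of scalars from $\mb{F}_p$ to a finite field costs nothing, and for $V$ killed by $\mf m^s$ the finite filtration by the $\mf m^a V$, the exactness of $\pi \mapsto \mc{F}_\pi$, and the Serre-subcategory property of admissible modules do give admissibility and independence of $C$. The vanishing for $i > 2(n-1)$ and the independence of $C$ also pass through the filtered colimit $V = \varinjlim_s V[\mf m^s]$ without incident.

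The genuine gap is exactly where you place it, and the repair you sketch does not close it. Knowing that $V^K[\mf m^j]$ is independent of the truncation level for $s \ge j$ gives no control on $H^i_{et}(\mb{P}^{n-1}_C, \mc{F}_{V[\mf m^s]})^H[\mf m^j]$: the functor $(-)^H[\mf m^j]$ is only left exact, so it cannot be ``propagated through'' the long exact sequences relating $V[\mf m^s]$ and $V[\mf m^{s+1}]$; at every stage the connecting maps out of $H^{i-1}(\mc{F}_{V[\mf m^{s+1}]/V[\mf m^s]})$ contribute a priori uncontrolled pieces to the $(H,\mf m^j)$-invariants, and nothing in the \emph{qualitative} statement of Corollary 3.14 prevents these from accumulating as $s \to \infty$. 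What is actually needed --- and what the geometric arguments of \cite{sch}, \S 3 supply via the fixed finite affinoid covering and the primitive comparison theorem --- is a \emph{quantitative} finiteness statement, uniform in $\pi$, of the shape $\dim H^i(\mb{P}^{n-1}_C, \mc{F}_\pi)^H \le c(H,i)\cdot \dim \pi^{K(H)}$ for a compact open $K(H)$ depending only on $H$; such uniform bounds let one dominate the terms of your direct system independently of $s$, and a filtered colimit of uniformly bounded modules is finitely generated. Without that input the colimit step is not a formal consequence of the ingredients you cite, so you should either import the quantitative version of the mod-$p$ theorem or follow Scholze's proof directly. A secondary inaccuracy: the vanishing in degrees $> 2(n-1)$ for $p$-power-torsion coefficients \emph{equal to} the residue characteristic, and for the non-constructible sheaves $\mc{F}_\pi$, is not covered by Huber's cohomological-dimension bounds (which concern torsion prime to the residue characteristic or require further hypotheses); it is again part of Scholze's Theorem 3.2/3.13, proved through the primitive comparison theorem on the proper space $\mb{P}^{n-1}_C$.
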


We now specify to the case $n=2$ and $F = \mb{Q}_p$, where we have a $p$-adic local Langlands correspondence $\rho \mapsto B(\rho)$. If $(A,\mf m)$ is any complete noetherian local ring with finite residue field of characteristic $p$ and $V$ is an admissible $A[\GL_2(\mb{Q}_p)]$-module, we can associate to it a continuous $A[G_{\mb{Q}_p}]$-module $\rho _V$ of rank 2. We define
$$J'(V) = \Hom _{G _{\mb{Q}_p}}(\rho _V, H^1(\mb{P}^1 _C, \mc{F}_V))$$
This is a functor from the category of admissible smooth $A[\GL_2(\mb{Q}_p)]$-modules to the category of $A[D^{\times}]$-modules. Alternatively we can view it as a functor on the category of continuous $A[G_{\mb{Q}_p}]$-modules via
$$J'(\rho) = \Hom _{G _{\mb{Q}_p}}(\rho , H^1(\mb{P}^1 _C, \mc{F}_{B(\rho)}))$$

\section{Local-global compatibilities}

Let $F/\bb{Q}$ be a totally real field where $p$ splits completely.  Choose one place $v$ over $p$ and let the others be $v_1, \ldots, v_n$.  Similarly, choose one place $w$ over infinity and let the others be $w_1, \ldots, w_n$.  It is convienent to introduce the notation $F_p^v = F_{v_1} \times \cdots \times F_{v_n}$.  Now choose a CM field $F'/F$ where $v$ and $v_i$ split for all $i$ (explicitly choose $v'$ and $v_i'$ such that $v = v' \overline{v}'$ and $v_i = v_i' \overline{v}_i'$), and is unramified at all finite places.  Let $\Delta/F'$ be a quaternion algebra that is split at all finite places away from $v$, and also has an involution $i$ of the second type such that, if $G = \{d \in \Delta | d i(d) = 1\}$ is the associated unitary group, then $G(F_v) = D^\times$, $G(F_{v_i}) = \mathrm{GL}_2(\mathbb{Q}_p)$, $G(F_w) = U(1,1)$ and $G(F_{w_i}) = U(2)$.  These assumptions imply that $F/\bb{Q}$ is an even degree extension.  $G$ will be viewed both as a group over $F$ as well as over $\bb{Q}$.  It is also useful to define $GG = \{d \in \Delta^\times | d i(d) \in \bb{Q}^\times\}$.  This is an algebraic group over $\bb{Q}$.  With this setup, it is possible to choose a division algebra $\overline{\Delta}/F'$ with an involution $\overline{i}$ of the second kind such that the associated unitary group $\overline{G}$ has the same invariants as $G$ away from $v$ and $w$, $\overline{G}(F_v) = \mathrm{GL}_2(\mathbb{Q}_p)$, and $\overline{G}(F_w) = U(2)$.  Finally, let $\overline{GG}$ be defined in analogy to $GG$.

\medskip

Let $K_p \subset GG(\bb{Q}_p)$ and $K^p \subset GG(\mathbb{A}_{f}^{p})$ be compact open subgroups.  Additionally, let $K^p_0$ be a hyperspecial subgroup of $GG(\mathbb{A}_F^{p\infty})$.  There is an isomorphism $GG(\bb{Q}_p) \cong \bb{Q}_p^\times \times G(F_v) \times G(F_{v_1}) \times \cdots \times G(F_{v_n})$.  We will abuse notation, and write $K_p = K_v K^v_p$ for $K_v \subset G(F_v)$ and $K^v_p \subset \bb{Q}_p^\times \times G(F_{v_1}) \times \cdots \times G(F_{v_n}) = \bb{Q}_p^\times \times D^\times \times \mathrm{GL}_2(\bb{Q}_p) \times \cdots \times \mathrm{GL}_2(\bb{Q}_p)$.  Since one has that $G(\mathbb{A}_F^{v\infty}) = \overline{G}(\mathbb{A}_F^{v\infty})$, there are compact open subgroups $\overline{K}_p^v$ and $\overline{K}^v$ corresponding to the subgroups $K_p^v$ and $K^p$.  Now, $GG$ gives rise to a Shimura curve, which will be denoted $Sh_{K_vK_p^vK^p} / F'$.  Also, there is a $\mathrm{GL}_2(\mathbb{Q}_p)$-set $X_{\overline{K}_p^v\overline{K}^p} := \overline{G}(F) \backslash \overline{G}(\mathbb{A}_F^\infty) / \overline{K}_p^v\overline{K}^p$.  As before, the Cerednik-Drinfel'd uniformization applies.  If $K_v = 1+\varpi_D^n \mathcal{O}_D$, then the uniformization can be written $Sh_{K_vK_p^vK^p, \mathbb{C}_p}^{an} = (\Sigma^n \times X_{\overline{K}_p^v\overline{K}^p}) / \mathrm{GL}_2(\mathbb{Q}_p)$, where $\mathrm{GL}_2(\bb{Q}_p)$ acts through its natural action on both $\Sigma^n$ and $X_{\overline{K}^v_p\overline{K}^p}$.

\medskip

To fix notation, let $\hat{H}^1_{\mathcal{O}_E, GG}(K^p) = \displaystyle{\lim_{\longleftarrow \atop s} \lim_{\longrightarrow\atop K_p}} H^1_{\acute{e}t}(Sh_{K_pK^p, \overline{F'}}, \mathcal{O}_E/\varpi_E^s)$.  Additionally, let $\hat{H}^1_{E, GG}(K^p) = \hat{H}^1_{\mathcal{O}_E, GG}(K^p) \otimes_{\mathcal{O}_E} E$.  There are also spaces $\hat{H}^0_{\mathcal{O}_E, \overline{GG}}(\overline{K}^p)$ and $\hat{H}^0_{E, \overline{GG}}(\overline{K}^p)$ defined similarly. We will write $\pi(\overline{K}^p)$ for $\hat{H}^0_{E, \overline{GG}}(\overline{K}^p)$ to make the notations lighter.

\medskip

We can now formulate the local-global compatibility for the first construction. For a Galois representation $\rho: G_{F} \ra \GL_2(E)$, let $J_{GG, \ell}(\rho |_{G_{F_\ell}})$ be the (generic) local Langlands correspondence if $GG(\bb{Q}_\ell) = \mathrm{GL}_2(\bb{Q}_\ell)$ and the Jacquet-Langlands correspondence otherwise. We define
$$AF_{\ell \neq p} =\left(\underset{\ell \neq p}{\bigotimes}' J_{GG, \ell}(\rho|_{G_{F_\ell}})\right)^{K^p}$$

\begin{theo}\label{theo-lg1}

Let $\rho: G_F \ra \GL_2(E)$ be a Galois representation pro-modular for $\overline{GG}$, with the associated Hecke character $
	\lambda: \mb{T}(K^p) \ra E$. We assume that for each $v|p$, the reduction $\bar{\rho}_v$ is not the sum of two characters, nor an extension of a character by itself, nor an extension of $\chi$ by $\chi \bar{\varepsilon}$. Then one has that
 $$\mathrm{Hom}_{G_{F}}(\rho, \hat{H}^1_{GG,E}(K^p)) \cong J(B(\rho|_{G_{F_{v}}})) \otimes_{i} B(\rho|_{G_{F_{v_i}}})  \otimes AF_{\ell \neq p}$$
\end{theo}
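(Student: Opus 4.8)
The plan is to transport the assertion, by Cerednik--Drinfel'd uniformization, into a statement about the completed cohomology of the definite form $\overline{GG}$ --- for which local-global compatibility is available --- and then to recognize the functor $J$ from its definition.

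\emph{Step 1 (uniformization in completed cohomology).} From $Sh_{K_vK_p^vK^p,\bb{C}_p}^{an}=(\Sigma^n\times X_{\overline{K}_p^v\overline{K}^p})/\GL_2(\mb{Q}_p)$ one takes \'etale cohomology with $\mathcal{O}_E/\varpi_E^s$-coefficients, passes to the limit over $K_v$ and over the level at $p$ away from $v$, and $\mf{m}$-adically completes. Since $X$ is profinite and $\Omega_{\mb{Q}_p}$ is a one-dimensional Stein space (so $\hat{H}^i(\Sigma)=0$ for $i\ge 2$), the Hochschild--Serre spectral sequence for the $\GL_2(\mb{Q}_p)$-quotient has only two nonzero rows and degenerates to a short exact sequence
\[
0\longrightarrow H^1\!\bigl(\GL_2(\mb{Q}_p),\hat{H}^0(\Sigma)\,\widehat{\otimes}_E\,\pi(\overline{K}^p)\bigr)\longrightarrow \hat{H}^1_{GG,E}(K^p)\longrightarrow\bigl(\hat{H}^1_E(\Sigma)\,\widehat{\otimes}_E\,\pi(\overline{K}^p)\bigr)^{\GL_2(\mb{Q}_p)}\longrightarrow 0;
\]
the left-hand term is built from $\hat{H}^0(\Sigma)$, on which the Galois action is abelian, so it is Eisenstein and vanishes after localizing at $\mf{m}_{\bar{\rho}}$. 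This leaves an isomorphism $\hat{H}^1_{GG,E}(K^p)\cong(\hat{H}^1_E(\Sigma)\,\widehat{\otimes}_E\,\pi(\overline{K}^p))^{\GL_2(\mb{Q}_p)}$, equivariant for the commuting actions of $D^\times$ and $G_{\mb{Q}_p}=G_{F_v}$ (through $\Sigma$, the latter via the Weil descent datum), of $\GL_2(\mb{Q}_p)$ at the places $v_i$ and of the prime-to-$p$ Hecke algebra (through $\pi(\overline{K}^p)$), the quotient being by the diagonal $\GL_2(\mb{Q}_p)=\overline{G}(F_v)$. This is essentially the main local-global result of \cite{EKThesis}, and making it precise --- in particular controlling the error term above --- is the technical core of the argument.

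\emph{Step 2 (the definite side).} The representation $\pi(\overline{K}^p)=\hat{H}^0_{E,\overline{GG}}(\overline{K}^p)$ is the completed cohomology of a zero-dimensional Shimura variety, hence admissible over $\overline{GG}(\mb{Q}_p)\cong\mb{Q}_p^\times\times\GL_2(\mb{Q}_p)^{n+1}$ and carrying no interesting Galois action. Since $\rho$ is pro-modular for $\overline{GG}$ with eigensystem $\lambda$ and each $\bar{\rho}_{v'}$ ($v'\mid p$) is as generic as assumed, local-global compatibility for $\overline{GG}$ --- via the patching of \cite{ceggps1}, where the relevant patched module is free of rank one and $B(-)$ is well behaved --- identifies the $\lambda$-eigenspace of $\pi(\overline{K}^p)_{\mf{m}}$ with $B(\rho|_{G_{F_v}})\,\widehat{\otimes}_E\,(\bigotimes_i B(\rho|_{G_{F_{v_i}}}))\,\otimes_E\,AF_{\ell\neq p}$, the factors at $v$ and at the $v_i$ all being at infinite level and the prime-to-$p$ factor being $AF_{\ell\neq p}$ because $\overline{GG}$ and $GG$ coincide away from $p$. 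The diagonal $\GL_2(\mb{Q}_p)$ of Step 1 acts only on the factor $B(\rho|_{G_{F_v}})$, trivially on the others.

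\emph{Step 3 (assembly).} Substituting Step 2 into Step 1 and applying $\Hom_{G_F}(\rho,-)$: by the Eichler--Shimura relations, the global $G_F$-action on $\hat{H}^1_{GG}(K^p)_{\mf{m}}$ is pinned down by the $G_{F_v}$-action carried by $\hat{H}^1_E(\Sigma)$ together with the Galois representations attached to the Hecke eigensystems of $\pi(\overline{K}^p)$, so a $G_F$-equivariant map out of $\rho$ lands in the $\rho$-isotypic (hence $\lambda$-) part, on which the global action is realized through $\hat{H}^1_E(\Sigma)$ by $\rho|_{G_{F_v}}$ (the factor $\pi(\overline{K}^p)$ being topologically Galois-trivial). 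Pulling the $\GL_2(\mb{Q}_p)$-trivial tensor factors $\bigotimes_i B(\rho|_{G_{F_{v_i}}})$ and $AF_{\ell\neq p}$ outside the invariants and the $\Hom$, one is left with $\Hom_{G_{F_v}}(\rho|_{G_{F_v}},(\hat{H}^1_E(\Sigma)\,\widehat{\otimes}_E\,B(\rho|_{G_{F_v}}))^{\GL_2(\mb{Q}_p)})\otimes_E(\bigotimes_i B(\rho|_{G_{F_{v_i}}}))\otimes_E AF_{\ell\neq p}$; since the Galois representation attached by the $p$-adic local Langlands correspondence to $B(\rho|_{G_{F_v}})$ is $\rho|_{G_{F_v}}$, and $F_v=\mb{Q}_p$, the first factor is precisely $J(B(\rho|_{G_{F_v}}))$ by the definition of the functor $J$. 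I expect Step 1 to be the main obstacle --- fixing the exact form of the uniformization in completed cohomology and showing the error term is Eisenstein --- with Steps 2 and 3 reducing to citing local-global compatibility for $\overline{GG}$ and unwinding the definition of $J$.
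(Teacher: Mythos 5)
Your sketch is essentially the approach the paper takes: the paper itself gives no argument for this theorem beyond citing the remarks after Theorem 5.2.2 of \cite{EKThesis}, but your three steps mirror exactly the paper's own proof of the parallel Theorem \ref{theo-lg2}, with the completed-cohomology form of the Cerednik--Drinfel'd uniformization playing the role that Scholze's $\hat{H}^1_{GG,E}(K^p)\simeq H^1_{et}(\mb{P}^1_C,\mc{F}_{\pi(K^p)})$ plays there, followed by the Eichler--Shimura relation and Proposition \ref{prop-lcbar}. Be aware that your Step 1 --- the degeneration of the descent spectral sequence for the $\GL_2(\mb{Q}_p)$-quotient and the Eisenstein-ness of the $\hat{H}^0(\Sigma)$ contribution --- is precisely the content being outsourced to the thesis, so as written it is an accurate roadmap rather than a self-contained proof; you correctly identify it as the technical core.
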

\begin{proof}
See the remarks following the proof of Theorem 5.2.2 in \cite{EKThesis}.
\end{proof}

\noindent Let us remind the reader the local-global compatibility for the group $\overline{GG}$, which was done in \cite{cs1} and \cite{cs2} using results of Emerton:

\begin{prop}\label{prop-lcbar}
	Let $\rho: G_F \ra \GL_2(E)$ be a Galois representation pro-modular for $\overline{GG}$, with the associated Hecke character $
	\lambda: \mb{T}(K^p) \ra E$. We assume that for each $v|p$, the reduction $\bar{\rho}_v$ is not the sum of two characters, nor an extension of a character by itself, nor an extension of $\chi$ by $\chi \bar{\varepsilon}$. Then we have as isomorphism of $\overline{GG}(\mb{A}_F)$-modules:
	$$\pi(K^p)^{\mb{T}(K^p)=\lambda} = \bigotimes _i B(\rho_{| G_{F_{v_i}}}) \otimes \Hom(\otimes _i B(\rho_{| G_{F_{v_i}}}), \pi(K^p))$$  	 
\end{prop}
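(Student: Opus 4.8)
I would deduce this from the $p$-adic local-global compatibility at $p$ for definite unitary groups, which can be approached either through Emerton's completed-cohomology methods (the route followed in \cite{cs1} and \cite{cs2}) or through the Taylor-Wiles-Kisin patching of \cite{ceggps1}; I sketch the patching route. Since $\overline{G}$ is definite at the infinite places, $\pi(K^p) = \hat{H}^0_{E,\overline{GG}}(\overline{K}^p)$ is concretely the completion along the level at $p$ of the space of $E$-valued functions on the profinite double coset set of $\overline{G}$ at tame level $\overline{K}^p$; it is an admissible unitary Banach representation of $\overline{GG}(\mb{Q}_p) \cong \mb{Q}_p^\times \times \GL_2(\mb{Q}_p)^{n+1}$ carrying a commuting action of $\mb{T}(K^p)$. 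The first step is to feed the localisation $\pi(K^p)_{\mf m_{\bar\rho}}$ at the non-Eisenstein maximal ideal of $\mb{T}(K^p)$ attached to $\bar\rho$ into the patching machine: after passing to duals it is realised by a patched module $M_\infty$ over $R_\infty = R^{\mathrm{loc}}[[x_1,\dots,x_g]]$, where $R^{\mathrm{loc}}$ contains the framed local deformation rings $R^{\square}_{\bar\rho_{v_i}}$ for the places $v_i \mid p$ and $M_\infty$ carries a commuting action of $\prod_i \GL_2(\mb{Q}_p)$. That $p$ splits completely in $F$ and $F'$, and that $\overline{K}^p$ is hyperspecial away from $p$, is precisely what makes this available and identifies the local deformation problems at the $v_i$ with those for $\GL_2(\mb{Q}_p)$.

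The crucial input is then that, as a module over $\prod_i R^{\square}_{\bar\rho_{v_i}}$ together with its $\prod_i \GL_2(\mb{Q}_p)$-action, $M_\infty$ is (up to the away-from-$p$ and similitude factors) the completed tensor product over $i$ of the Paskunas-Colmez patched modules $M_\infty(\bar\rho_{v_i})$ for $\GL_2(\mb{Q}_p)$, whose specialisation at a deformation $\rho_{|G_{F_{v_i}}}$ recovers $B(\rho_{|G_{F_{v_i}}})$. Granting this, specialising $M_\infty$ at the point of $R_\infty$ cut out by $\rho$ and $\lambda$ shows that the $\prod_i \GL_2(\mb{Q}_p)$-action on $\pi(K^p)^{\mb{T}(K^p)=\lambda}$ is $\bigotimes_i B(\rho_{|G_{F_{v_i}}})$-isotypic, i.e. that the evaluation map from $\bigotimes_i B(\rho_{|G_{F_{v_i}}}) \widehat{\otimes} \Hom_{\prod_i \GL_2(\mb{Q}_p)}(\bigotimes_i B(\rho_{|G_{F_{v_i}}}), \pi(K^p))$ to this eigenspace is an isomorphism; the residual actions of $\mb{Q}_p^\times$, of the $\GL_2(\mb{Q}_p)$-factor at $v$, and of the prime-to-$p$ Hecke algebra are carried along on the $\Hom$-space, and letting $\overline{K}^p$ vary promotes this to an isomorphism of $\overline{GG}(\mb{A}_F)$-modules. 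One could equally absorb the factor at $v$ into the tensor product, but it is kept separate here because it is the input to the Cerednik-Drinfel'd argument afterwards.

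The main obstacle is the crucial input: proving that $M_\infty$ genuinely has this tensor-product structure over $\GL_2(\mb{Q}_p)$ at each $v_i$. The strategy is to check it first on the Zariski-dense set of potentially semistable points of $\Spec R_\infty$ with regular Hodge-Tate weights, where it follows from classical local-global compatibility for the unitary group, the compatibility of the classical and $p$-adic local Langlands correspondences on locally algebraic vectors, and strong multiplicity one (via base change and Jacquet-Langlands) to pin down the multiplicity; one then propagates to all of $\Spec R_\infty$, in particular to the point attached to $\rho$ which need not be de Rham, by a density and flatness argument using that $M_\infty$ is finitely generated over the reduced equidimensional ring $R_\infty$ and flat over the local deformation rings. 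The running hypotheses on $\bar\rho_v$, namely that it is not a sum of two characters, not a self-extension of a character, and not an extension of $\chi$ by $\chi \bar{\varepsilon}$, are exactly the conditions under which Paskunas' theory of Colmez's Montr\'eal functor applies at each $v_i$, so that $B(\bar\rho_{v_i})$ has no spurious endomorphisms and the $\Hom$-space behaves as an honest coefficient module. The details are carried out in \cite{cs1} and \cite{cs2}, following Emerton.
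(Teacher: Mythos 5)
Your proposal goes by a genuinely different route than the paper. The paper follows Emerton's two-step method as adapted in \cite{cs1}, \cite{cs2}: (1) produce a nonzero map $\bigotimes_i B(\rho_{|G_{F_{v_i}}}) \ra \pi(K^p)^{\mb{T}(K^p)=\lambda}$, in the reducible totally indecomposable case by reducing (via Zariski density of crystalline automorphic points, using \cite{cho3}) to the ordinary/crystalline situation treated in \cite{bh}, \cite{bc}; then (2) show the map is an isomorphism by reducing to injectivity of $\bigotimes_i \pi_{v_i} \ra \pi(K^p)[\mf m]$ mod $p$. You instead propose to run everything through a patched module $M_\infty$ and a tensor-factorization of $M_\infty$ into local Paskunas-type patched modules at the $v_i$, then specialize at the point cut out by $\rho$ and $\lambda$. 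That is a legitimate modern strategy in the spirit of \cite{ceggps1}, \cite{ceggps2}, and if the factorization were available it would buy a cleaner, more uniform statement; but as written it has a gap.

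The gap is that the "crucial input" you flag --- the tensor-product structure of $M_\infty$, equivalently the identification of its mod-$p$ fibre with the dual of $\bigotimes_i \pi_{v_i}$ --- is exactly where the difficulty of the proposition lives, and your proposed "density and flatness" propagation does not close it. Checking the statement at a Zariski-dense set of classical points only controls things generically on each component; $M_\infty$ is not flat over $R_\infty$, and the proposition concerns an arbitrary pro-modular (possibly non-de Rham) point, so one still has to rule out that the evaluation map has a kernel at that point. The residual case that the hypotheses explicitly permit --- $\bar{\rho}_{v_i}$ a nonsplit extension of $\chi_i\bar{\varepsilon}$ by $\chi_i$, so that $\pi_{v_i}$ has a three-step filtration containing a one-dimensional piece --- is precisely the case \emph{not} covered by \cite{cs1}, \cite{cs2} (nor off-the-shelf by \cite{ceggps2}, which works at a single place and under genericity), so your closing appeal to those references does not apply. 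The paper closes this case by showing any map $\bigotimes_i \pi_{v_i} \ra \pi(K^p)[\mf m]$ cannot factor through the principal series quotient (Serre weights, \cite{bgg}) and cannot factor through the Steinberg piece either, since that would produce a one-dimensional $\GL_2(F_{v_i})$-stable subspace in a non-Eisenstein part of completed cohomology, contradicting an Ihara's lemma obtained from strong approximation for the associated special unitary group. Your argument needs a substitute for this step.
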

\begin{proof}
The general method is due to Emerton and described in \cite{em1}, and then adapted in \cite{cs1} and \cite{cs2}. There are two steps: 

\medskip 

\noindent (1) Construct a non-zero map 
$$ \bigotimes _i B(\rho_{| G_{F_{v_i}}}) \ra \pi(K^p)^{\mb{T}(K^p)=\lambda}$$

\noindent (2) Show that such a map is an injection and in fact an isomorphism.

\medskip

Let us start with (1). This part is proved in \cite{cs1}, under the assumption that $\bar{\rho}_{v_i}$ is irreducible for each $i$. In a reducible totally indecomposable case, Lemma 5 of \cite{cs1} and the argument which follows, allow us to reduce the proof to the case of $\rho$'s lying in any Zariski dense set $S$. We take $S$ to be $P^{cris}_{autom}$ from Proposition 4.10 of \cite{cho3}, and thus $\rho_{| G_{F_{v_i}}}$ is crystalline and totally indecomposable at each $i$. Then the result follows from \cite{bh} or \cite{bc} (where the result is given for $U(3)$, but the same proof applies for $U(2)$ and is easier). 

\medskip

\noindent Let us now prove (2). By Proposition 1 and 2 of \cite{cs2} it is enough to show injectivity mod $p$ (in fact mod $\mf{m}$, where $\mf{m}$ is the maximal ideal of the Hecke algebra associated to the $\bar{\rho}$). As in \cite{cs2} we reduce the proof to showing that the any map
$$\bigotimes _i \pi _{v_i} \ra \pi(K^p) [\mf m]$$
is injective, where $\pi _{v_i}$ is the $\GL_2(F_{v_i})$-representation associated to $\rho_{| G_{F_{v_i}}}$ by the mod $p$ local Langlands correspondence. 

\medskip

The case which is not treated in \cite{cs1} and \cite{cs2}, is the case when some of $\bar{\rho} _{| G_{F_{v_i}}}$ are extensions of $\chi _i \bar{\varepsilon}$ by $\chi _i$. In this case $\pi _{v_i}$ is a non-split extension of the topologically irreducible $\Ind \chi _{v_i} \otimes \chi _{v_i} \bar{\varepsilon}$ by a representation which is itself a non-split extension of a one-dimensional representation by the topologically irreducible representation $(\chi _{v_i} \circ \det) \otimes \bar{St}$. This is a three-step filtration. Any map 
$$\bigotimes _i \pi _{v_i} \ra \pi(K^p) [\mf m]$$
cannot factor through the principal series quotient because the weights are wrong by Serre's conjecture (see \cite{bgg}), so the only possibility is that some of the maps factor through the Steinberg. But then one would have a 1-dimensional $\GL_2(F_{v_i})$-stable subspace in a non-Eisenstein part of completed cohomology. This contradicts Ihara's lemma as we now show.

\medskip

Let $\SG$ be the corresponding special unitary group over $F^+$. It suffices to show that a vector in $H^0(\SG(F^+)\backslash SG(A_{F^+, f})/K^p, k_E)$ which is $\SL_2(F^+_v)$-invariant is actually invariant under the action of $\SG(\mb{A}_{F^+, f})$.  But this is just strong approximation: we have invariance under the $\SG(F^+)$, $K^p$ and the points at a split place so we have invariance under the whole group. Going back to the usual unitary group, one gets that any one-dimensional subspace that is $G(F^+_v)$-stable must be $G(\mb{A}_{F^+, f})$-stable, and then any character of $G(\mb{A}_{F^+, f})$ must be Eisenstein. Thus the map must be injective.

\end{proof}

\noindent We can now prove the local-global compatibility for the group $GG$ for the second construction.

\begin{theo}\label{theo-lg2}
Let $\rho: G_F \ra \GL_2(E)$ be a Galois representation pro-modular for $GG$, with the associated Hecke character $
	\lambda: \mb{T}(K^p) \ra E$. We assume that for each $v|p$, the reduction $\bar{\rho}_v$ is not the sum of two characters, nor an extension of a character by itself, nor an extension of $\chi$ by $\chi \bar{\varepsilon}$. Then we have
$$\mathrm{Hom}_{G_{F}}(\rho, \hat{H}^1_{GG,E}(K^p)) \cong J'(B(\rho|_{G_{F_{v}}})) \otimes_{i} B(\rho|_{G_{F_{v_i}}})  \otimes AF_{\ell \neq p}$$
where $AF_{\ell \neq p} = \left(\bigotimes'_{u\nmid p\infty} \pi_{LL}(\rho|_{G_{F_u}})\right)^{K^p}$.
\end{theo}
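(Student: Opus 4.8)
The plan is to run the proof of Theorem~\ref{theo-lg1} essentially verbatim, with the single change that the completed cohomology of the Drinfel'd tower is replaced, at the place $v$, by Scholze's sheaf cohomology $H^1(\mb P^1_C,\mc F_{(-)})$. The real content is therefore the analogue, for the second construction, of the geometric input behind Theorem~\ref{theo-lg1} (proved in \cite{EKThesis}).

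First I would reinterpret the Cerednik-Drinfel'd uniformization $Sh^{\mrm{an}}_{K_vK_p^vK^p,\mb C_p}=(\Sigma^n\times X_{\overline{K}_p^v\overline{K}^p})/\GL_2(\mb Q_p)$ at infinite level at $v$. Using $\varprojlim_n\Sigma^n\simeq\mc M_{\LT,\infty}$ (the Faltings-Fargues identification of the Drinfel'd and Lubin-Tate towers), the infinite-$v$-level Shimura curve becomes the associated bundle $\mc M_{\LT,\infty}\times_{\GL_2(\mb Q_p)}X_{\overline{K}_p^v\overline{K}^p}$; composing with the Gross-Hopkins period map $\pi_{\GH}:\mc M_{\LT,\infty}\to\mb P^1_{\breve{\mb Q}_p}$, which is a $\GL_2(\mb Q_p)$-torsor and is $D^\times$-equivariant, exhibits it as a pro-(finite \'etale) cover of $\mb P^1$ with $0$-dimensional fibres whose pushforward is exactly $\mc F_\pi$ for $\pi=C(X_{\overline{K}_p^v\overline{K}^p})$ in the sense of Proposition 3.1 of \cite{sch}. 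Passing to $\varpi_E^s$-coefficients, taking the colimit over $K_v$, $\mf m$-adically completing, and using that $\pi_{\GH}$ has no higher direct images (its fibres being $0$-dimensional), this should produce a $D^\times\times\prod_i\GL_2(F_{v_i})\times G_{\mb Q_p}\times\mb T(K^p)$-equivariant identification
$$\hat H^1_{E,GG}(K^p)\;\simeq\;H^1\big(\mb P^1_C,\mc F_{\pi(\overline{K}^p)}\big),$$
where $\pi(\overline{K}^p)=\hat H^0_{E,\overline{GG}}(\overline{K}^p)$ is viewed as an admissible $\GL_2(\mb Q_p)$-representation at $v$ carrying its residual $\prod_i\GL_2(F_{v_i})\times\mb T(K^p)$-action; the $H^0$- and $H^2$-parts of $R\Gamma(\mb P^1_C,\mc F_{\pi(\overline{K}^p)})$ are Eisenstein and are killed by $\Hom_{G_F}(\rho,-)$ under our hypotheses on $\bar{\rho}$, so only $H^1$ survives.

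Granting this, the rest is formal. Since $\pi\mapsto\mc F_\pi$ is exact and $R\Gamma(\mb P^1_C,-)$ is compatible with localization at the Hecke eigensystem $\lambda$ attached to $\rho$ (Proposition 3.1 and Theorem 4.4 of \cite{sch}), one gets $\hat H^1_{E,GG}(K^p)^{\mb T(K^p)=\lambda}\simeq H^1(\mb P^1_C,\mc F_{\pi(\overline{K}^p)^{\mb T(K^p)=\lambda}})$. The full local-global compatibility for $\overline{GG}$ (Proposition~\ref{prop-lcbar}, applied also at the place $v$, where $\overline{GG}$ is again a form of $\GL_2$ so the same argument applies, together with classical local-global compatibility away from $p$ as in Theorem~\ref{theo-lg1}) factors $\pi(\overline{K}^p)^{\mb T(K^p)=\lambda}$ as an external tensor product $\big(\bigotimes_i B(\rho|_{G_{F_{v_i}}})\big)\otimes B(\rho|_{G_{F_v}})\otimes AF_{\ell\neq p}$. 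Because $\mc F_{(-)}$ interacts only with the $\GL_2(\mb Q_p)$-factor at $v$, the factors $\bigotimes_i B(\rho|_{G_{F_{v_i}}})$ and $AF_{\ell\neq p}$ pass outside $\mc F$ and $H^1$, giving
$$\hat H^1_{E,GG}(K^p)^{\mb T(K^p)=\lambda}\;\simeq\;H^1\big(\mb P^1_C,\mc F_{B(\rho|_{G_{F_v}})}\big)\otimes\bigotimes_i B(\rho|_{G_{F_{v_i}}})\otimes AF_{\ell\neq p}.$$
Applying $\Hom_{G_F}(\rho,-)$ and noting that on the $\lambda$-part the global Galois action is $\rho$ on the Eichler-Shimura factor tensored with the geometric $G_{\mb Q_p}$-action on $H^1(\mb P^1_C,\mc F_{B(\rho|_{G_{F_v}})})$, the left side becomes $\Hom_{G_{\mb Q_p}}\big(\rho|_{G_{F_v}},H^1(\mb P^1_C,\mc F_{B(\rho|_{G_{F_v}})})\big)\otimes\bigotimes_i B(\rho|_{G_{F_{v_i}}})\otimes AF_{\ell\neq p}$, which equals $J'(B(\rho|_{G_{F_v}}))\otimes\bigotimes_i B(\rho|_{G_{F_{v_i}}})\otimes AF_{\ell\neq p}$ by definition of $J'$. (Comparing with Theorem~\ref{theo-lg1} then yields $J(\rho|_{G_{F_v}})\simeq J'(\rho|_{G_{F_v}})$ for all pro-modular $\rho$, the input the later patching argument propagates to all $\rho$.)

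The main obstacle is the geometric comparison of the second paragraph: one must simultaneously (i) splice the Cerednik-Drinfel'd uniformization at infinite level with the Faltings-Fargues isomorphism $\mc M_{\mrm{Dr},\infty}\simeq\mc M_{\LT,\infty}$ and with Scholze's descent of the tower to the sheaf $\mc F_{(-)}$ on $(\mb P^1/D^\times)_{\mrm{et}}$; (ii) check that the commuting actions appearing (of $D^\times$ via $\pi_{\GH}$, of $\prod_i\GL_2(F_{v_i})\times\mb T(K^p)$ via $X_{\overline{K}_p^v\overline{K}^p}$, and of $G_{\mb Q_p}$) coincide with those built into $\hat H^1_{E,GG}(K^p)$ and into the definition of $J'$; (iii) verify that $\mf m$-adic completion and the colimit over $K_v$ commute with forming $\mc F_{(-)}$ and with $H^1(\mb P^1_C,-)$, which should follow from the admissibility and $C$-independence in Proposition 3.1 and Theorem 4.4 of \cite{sch}; and (iv) confirm there are no higher direct images along $\pi_{\GH}$, so that $R\Gamma(Sh)\simeq R\Gamma(\mb P^1,\mc F)$ rather than merely agreeing in degree $1$ up to a spectral sequence. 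Everything downstream is formal, relying only on Proposition~\ref{prop-lcbar}, the exactness and admissibility results of \cite{sch}, and the definition of $J'$.
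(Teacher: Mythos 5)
Your proposal follows essentially the same route as the paper: localize at the Hecke eigensystem $\lambda$, use the exactness of $\pi\mapsto\mc F_\pi$ to pull the localization inside $H^1(\mb P^1_C,\mc F_{(-)})$, factor $\pi(\overline K^p)^{\mb T(K^p)=\lambda}$ via Proposition~\ref{prop-lcbar}, and conclude from the definition of $J'$. The geometric comparison $\hat H^1_{E,GG}(K^p)\simeq H^1(\mb P^1_C,\mc F_{\pi(\overline K^p)})$ that you flag as the main obstacle is not reproved in the paper but simply cited as Theorem 6.2 of \cite{sch} (with the remark that Scholze's torsion-coefficient argument carries over to completed cohomology), and your sketch of it is essentially Scholze's own strategy.
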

\begin{proof}
Let $\pi(K^p) = \widehat{H}^0_{\overline{GG}, E}(\overline{K}^p)$ be the space of p-adic automorphic forms on $\overline{GG}$ of level $\overline{K}^p$. By Theorem 6.2 in \cite{sch} we have an isomorphism \footnote{Scholze actually considered torsion modules, but the proof is valid in the case of completed cohomology, as he remarks himself.}
$$\hat{H}^1_{GG,E}(K^p) \simeq H^1 _{et} (\mb{P}^1 _{C}, \mc{F}_{\pi(K^p)})$$
Let $\lambda: \mb{T}(K^p) \ra E$ be a Hecke system \footnote{One might also reason with $\mf{m}$, the maximal ideal associated to $\bar{\rho}$, and localise cohomology groups at $\mf{m}$.} associated to $\rho$. We now take $\mathrm{Hom}_{G_{F}}(\rho,-)$, and observe that $\mb{T}(K^p)$ acts through $\lambda$ on $\mathrm{Hom}_{G_{F}}(\rho,\hat{H}^1_E(K^p))$ (this is the Eichler-Shimura relation) and 
$$H^1 _{et} (\mb{P}^1 _{C}, \mc{F}_{\pi(K^p)})^{\mb{T}(K^p)=\lambda} \simeq H^1 _{et} (\mb{P}^1 _{C}, \mc{F}_{\pi(K^p)^{\mb{T}(K^p)=\lambda}})$$ 
as $\mb{T}(K^p)$ acts on $H^1 _{et} (\mb{P}^1 _{C}, \mc{F}_{\pi(K^p)})$ through $\pi(K^p)$. Hence we are left with proving an isomorphism of $\prod _i G_{F_{v_i}} \times G(\mb{A})$-modules
$$\Hom_{\prod _i G_{F _{v_i}}}(\otimes _i \rho _{| G_{F_{v_i}}}, H^1 _{et} (\mb{P}^1 _{C}, \mc{F}_{\pi(K^p)^{\mb{T}(K^p)=\lambda}}) \simeq $$
$$\simeq \Hom _{\prod _i G _{F _{v_i}}}(\otimes _i \rho _{| G_{F_{v_i}}} , H^1(\mb{P}^1 _C, \mc{F}_{\otimes _i B(\rho_{| G_{F_{v_i}}})})) \otimes \Hom(\otimes _i B(\rho_{| G_{F_{v_i}}}), \pi(K^p))$$
But this it true investigating $\mc{F}_{\pi}$ at the geometrical level and observing that we have
$$\pi(K^p)^{\mb{T}(K^p)=\lambda} = \bigotimes _i B(\rho_{| G_{F_{v_i}}}) \otimes \Hom(\otimes _i B(\rho_{| G_{F_{v_i}}}), \pi(K^p))$$  	
by Proposition \ref{prop-lcbar}.

\end{proof}

\noindent We observe that this is analogous to Theorem \ref{theo-lg1}, but for simpler representations. With these local-global compatibilities, we can now prove the main theorem using patching.

\section{Patching}

In this section we prove that both constructions give the same functor. Let us choose a representation $\overline{\rho}_p : G_{\bb{Q}_p} \rightarrow \mathrm{GL}_2(k_E)$.

\begin{theo}\label{theo:comp}
Let $(A,\mf m)$ be a complete noetherian local ring with finite residue field of characteristic $p$ and $V$ an admissible $A[\GL_2(\bb{Q}_p)]$-module such that $V / \mf m V \simeq B(\bar{\rho}_p)$. We have $J(V) = J'(V)$.
\end{theo}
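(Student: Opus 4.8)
The plan is to reduce the local statement to the global local-global compatibilities of Theorems~\ref{theo-lg1} and~\ref{theo-lg2} via patching, and then exploit the freeness of the patched module to compare $J$ and $J'$ as functors on the whole category of admissible $A[\GL_2(\bb{Q}_p)]$-modules $V$ with $V/\mf{m}V \simeq B(\bar\rho_p)$.

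First I would set up the patching datum following \cite{ceggps1}: choose a global Galois representation $\bar r : G_F \to \GL_2(k_E)$ whose restriction at the distinguished place $v$ is (a twist of) $\bar\rho_p$, and which is chosen to be suitably generic at all other places (irreducible, modular for $GG$ and $\overline{GG}$, unramified at auxiliary Taylor--Wiles primes, etc.), so that the hypotheses of Theorems~\ref{theo-lg1}, \ref{theo-lg2} and Proposition~\ref{prop-lcbar} are satisfied at every place above $p$. Running the Taylor--Wiles--Kisin patching construction on completed cohomology at level $K^p$ for the groups $GG$ and $\overline{GG}$ produces a patched module $M_\infty$ (resp. $M_\infty^{J}$, $M_\infty^{J'}$) over a power series ring $R_\infty = R_\infty^v[[x_1,\dots,x_h]]$, where $R_\infty^v$ is the relevant (framed) local deformation ring at $v$; the key property, which is exactly where the genericity of $\bar r$ away from $p$ is used, is that $M_\infty$ is \emph{free} (or at least flat) over the local deformation ring at $v$ after tensoring with the Colmez/Pa{\v s}kunas functor. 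Concretely, for any $A$ with a map $R_\infty^v \to A$ realizing $V$, one has $M_\infty \widehat\otimes_{R_\infty^v} A \cong$ the global Hom-space, and both $J(V)$ and $J'(V)$ are obtained by specializing the same patched object.

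Second, having both $J$ and $J'$ expressed through $M_\infty$, I would observe that Theorems~\ref{theo-lg1} and~\ref{theo-lg2} give, for the \emph{same} pro-modular $\rho$, two descriptions of $\Hom_{G_F}(\rho,\hat H^1_{GG,E}(K^p))$, one involving $J(B(\rho|_{G_{F_v}}))$ and the other $J'(B(\rho|_{G_{F_v}}))$, while all the other tensor factors $\bigotimes_i B(\rho|_{G_{F_{v_i}}}) \otimes AF_{\ell\neq p}$ coincide. Since we may pick $K^p$ and the auxiliary data so that these remaining factors are nonzero, this yields $J(B(\rho|_{G_{F_v}})) \cong J'(B(\rho|_{G_{F_v}}))$ for a Zariski-dense set of local representations $\rho_p = \rho|_{G_{F_v}}$ (those arising as restrictions of pro-modular global representations), compatibly in families. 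Equivalently, the two patched functors $V \mapsto J(V)$ and $V\mapsto J'(V)$ agree after base change along a dense set of closed points of $\Spec R_\infty^v$.

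Finally, to promote this to an equality of functors for \emph{all} $(A,\mf m, V)$ with $V/\mf m V \simeq B(\bar\rho_p)$, I would use that both $J(-)$ and $J'(-)$ are compatible with the patched module $M_\infty$ over $R_\infty^v$ — i.e. each is computed as $\Hom_{G_{\bb{Q}_p}}$ of the universal deformation into a fixed $M_\infty$-type object — so the comparison isomorphism, being defined over the dense set of closed points, extends over the reduced ring $R_\infty^v$ by flatness/density, and then specializes to the given $(A,V)$ via the structure map $R_\infty^v \to A$. The main obstacle I expect is exactly this interpolation step: one has to check that the isomorphisms produced at the dense set of classical points are induced by a single morphism of patched $R_\infty^v$-modules (compatibility of the Scholze sheaf cohomology $H^1(\mb{P}^1_C,\mc F_{(-)})$ and the completed cohomology $\hat H^1_A(\Sigma)$ with the patching functor, and with passage to $\Hom_{G_{\bb{Q}_p}}(\rho_V,-)$), and that $M_\infty^v$ is $\mc O$-flat and supported on all of $\Spec R_\infty^v$ so that agreement on a dense set forces agreement everywhere. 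Once the morphism is globalized, evaluating at $V$ is immediate.
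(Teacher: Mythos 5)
Your overall strategy --- globalize $\bar\rho_p$, patch completed cohomology, and play Theorem~\ref{theo-lg1} off against Theorem~\ref{theo-lg2} to identify $J$ and $J'$ --- is the same as the paper's. But the way you transfer the comparison from global objects back to the patched module contains a genuine gap. You only extract the isomorphism $J(B(\rho|_{G_{F_v}}))\cong J'(B(\rho|_{G_{F_v}}))$ at a Zariski-dense set of \emph{classical} (pro-modular, characteristic-zero) points of the deformation space, and then propose to interpolate over $\Spec R_\infty^v$ by ``flatness/density.'' You flag this yourself as the main obstacle, and it really is one: $J(V)$ and $J'(V)$ are admissible $D^\times$-representations over $A$, not coherent sheaves, so pointwise isomorphisms of fibers at a dense set of closed points do not produce an isomorphism over $A$ unless you first exhibit a single morphism of patched objects inducing them --- which is precisely what you have not constructed. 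The auxiliary input you invoke to make this work (freeness/flatness of $M_\infty$ over the local deformation ring at $v$) is neither established nor needed in this setting.

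The paper's proof sidesteps interpolation entirely. After reducing to $A=R_{\bar\rho_p}[[t_1,\dots,t_n]]$ and $V=B(\rho_A)$ and globalizing via \cite{gk}, it applies the (integral, finite-level) forms of the two local-global compatibilities directly to the finite-level representations $\pi_n(I)_v$ appearing in the patching construction: both $J(\pi_n(I)_v)\otimes_i\pi_n(I)_{v_i}\otimes AF_{\ell\neq p}$ and $J'(\pi_n(I)_v)\otimes_i\pi_n(I)_{v_i}\otimes AF_{\ell\neq p}$ are canonically identified with the \emph{same} multiplicity space $\Hom_{G_F}(\rho,\hat H^1_{GG,E}(K^p))$, so $J$ and $J'$ agree on each $\pi_n(I)_v$ via identifications compatible with the ultraproduct and the inverse limit over $I$; since both functors commute with these limits, one gets $J(\pi_{\infty,v})=J'(\pi_{\infty,v})$ and hence the theorem. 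To repair your argument you would either need to carry out the construction of the global morphism of patched modules underlying your dense-set isomorphisms, or (better) replace the classical-points step by the finite-level comparison as in the paper.
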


\noindent Let $\overline{\rho}: G_{F'} \rightarrow \mathrm{GL}_2(k_E)$ be an essentially conjugate self-dual globalization of $\overline{\rho}_p$, that is, $\overline{\rho}|_{G_{{F'}_{v'}}} \cong \overline{\rho}_p$ (for all $v' | p$) and $\rho^{\vee} \cong \rho^c \otimes \chi$ for some character $\chi$.  Also, let $S$ be a set of primes of $F'$ containing all primes over $p$ and stable under conjugation.  Then there are deformation rings $R_{\overline{\rho}, S}^{\psi}$ and $R_{\overline{\rho}, S}^{\square, \psi}$ parameterizing (framed) deformations of $\rho$ that are also essentially conjugate self dual with determinant $\psi \chi_{cyc}$.  Additionally, there are local deformation rings $R_{\overline{\rho}_p}^{\psi_v}$, $R_{\overline{\rho}_p}^{\square, \psi_v}$, $R_{\overline{\rho}_{v_i}}^{\psi_{v_i}}$ and $R_{\overline{\rho}_{v_i}}^{\square, \psi_{v_i}}$ parameterizing (framed) local deformations.

\medskip

\noindent Put $g = \dim _{\mb{F}_q} H^1(G_{\mb{Q},S}, ad^0\bar{\rho}(1)) - [F:\mb{Q}]$. For the patching argument we fix finite sets $Q_N$ of $g + [F:\mb{Q}]$ primes $l$ of $F$ such that $q_l \equiv 1 \mod p^N$ for all $l \in Q_n$, $l$ splits completely in $F'$, and $\overline{Frob}_l$ has distinct eigenvalues (they exist by Proposition 2.2.4 in \cite{ki}). Moreover $R_{\bar{\rho}, S\cup Q_N} ^{\square, \psi}$ is topologically generated by $g$ elements over $R_{\overline{\rho}_p} ^{\square, \psi_v}\hat{\otimes} R_{\overline{\rho}_{v_1}}^{\square, \psi_{v_1}} \hat{\otimes} \cdots \hat{\otimes} R_{\overline{\rho}_{v_n}}^{\square, \psi_{v_n}}$. 

\medskip

\noindent For each $N \geq 1$, let $U_{Q_N}(1) \subset U_{Q_N}(0) \subset G(\mb{A} _f ^S) \simeq \GL_2(\mb{A} _f ^S)$ be the compact open subgroups given by
$$U_{Q_N}(1) = \prod _{l \not \in Q_N} \GL_2(\mc{O}_{F_l}) \times \prod _{l\in Q_N} U_l(1) \subset U_{Q_N}(0) = \prod _{l \not \in Q_N} \GL_2(\mc{O}_{F_l}) \times \prod _{l\in Q_N} U_l(0)$$
where 
$$U_l(1) = \{\left( \begin{smallmatrix} a & b \\ c & d \end{smallmatrix} \right) \ | \ c \equiv 0 \mod l, a/d \mapsto 1 \in \Delta _l \} $$
$$\subset U_l(0) = \{\left( \begin{smallmatrix} a & b \\ c & d \end{smallmatrix} \right) \ | \ c \equiv 0 \mod l \} \subset \GL_2(\mc{O}_{F_l})$$
where $\Delta _l \simeq \mb{Z} / p^N \mb{Z}$ is the unique quotient of order $p^N$ of the units $k_l ^{\times}$ of the residue field $k_l$ at $l$. Hence $U_{Q_N}(1) \subset U_{Q_N}(0)$ is a normal subgroup with quotient $\Delta _{Q_N} =  U_{Q_N}(0) / U_{Q_N}(1) \simeq (\mb{Z} / p^N \mb{Z}) ^{g +[F:\mb{Q}]}$.

\medskip

\noindent Up to replacing $\mb{F}_q$ by $\mb{F}_{q^2}$, we can fix a root $\alpha _v$ of the polynomial $X^2 - T_vX + q_v S_v$ in $\mb{F}_q$ for all $v\in Q_n$. For sufficiently small compact open subgroups $K \subset \GL_2(F)$ we set for $i=1,2$
$$S_{\psi}(KU_{Q_n}(i), \mc{O} _E) = C^0(G(\mb{Q}) \backslash G(\mb{A}_f) / KU_{Q_n}(i), \mc{O}_E)[\psi]$$
be the space of "automorphic" functions with central character $\psi$. On these spaces we can define an action of the Hecke algebra $\mb{T}(U_{Q_n}(i))$ generated by the usual elements $T_v$ and $S_v$ for $v \not \in Q_n$, $v \not \in S$, as well as operators $U_v$ for $v \in Q_n$ given by the double coset $[U_v(i)diag(\varpi _v, 1) U_v(i)]$. Let $\mf{m} _{Q_n}(i) \subset \mb{T}(U_{Q_n}(i))$ denote the maximal ideal generated by $\mf m \cap \mb{T}(U_{Q_n}(i))$ and $U_v - \alpha _v$ for $v \in Q_n$. By Lemma 2.1.7 of \cite{ki} the natural map
$$S_{\psi}(E, \mc{O}_E)_{\mf m} \ra S_{\psi}(KU_{Q_n}(0), \mc{O} _E) _{\mf{m} _{Q_n}(0)}$$
is an isomorphism. Lemma 2.1.4 of \cite{ki} implies that $S_{\psi}(KU_{Q_n}(1), \mc{O} _E) _{\mf{m} _{Q_n}(1)}$ is a finite free $\mc{O}_E[\Delta _{Q_n}]$-module with 
$$S_{\psi}(KU_{Q_n}(1), \mc{O} _E) _{\mf{m} _{Q_n}(1)} \otimes _{\mc{O}_E[\Delta _{Q_n}]} \mc{O} _E \simeq S_{\psi}(KU_{Q_n}(0), \mc{O} _E) _{\mf{m} _{Q_n}(0)}$$
By the existence of Galois representations, there is an action of the deformation ring $R ^{\psi} _{\bar{\rho}, S \cup Q_n}$ on $S_{\psi}(KU_{Q_n}(1), \mc{O} _E) _{\mf{m} _{Q_n}(1)}$. Moreover, using local deformation rings at places $v \in Q_n$, there is a map $\mc{O}_E[[y_1,...,y_{g + [F:\mb{Q}]}]] \ra R ^{\psi} _{\bar{\rho}, Q_n}$ such that the action of $\mc{O}_{E}[[y_1,...,y_{g+[F:\mb{Q}]}]]$ on $S_{\psi}(KU_{Q_n}(1), \mc{O} _E) _{\mf{m} _{Q_n}(1)}$ comes from the $\Delta _{Q_n}$-action via the fixed surjection 
$$\mc{O}_E[[y_1,...,y_{g+[F:\mb{Q}]}]] \ra \mc{O}_E[(\mb{Z}/p^n\mb{Z})^{g+[F:\mb{Q}]}] \simeq \mc{O}_E[\Delta _{Q_n}]$$
The map $R_{\bar{\rho}, S\cup Q_n} ^{\psi} \ra R_{\bar{\rho}, S\cup Q_n} ^{\square, \psi}$ is formally smooth of dimension 3, so that we can choose 
$$y_{g+[F:\mb{Q}] +1}, y_{g+[F:\mb{Q}] +2}, y_{g+[F:\mb{Q}] +3}$$ 
such that
$$R_{\bar{\rho}, S\cup Q_n} ^{\square, \psi} \simeq R_{\bar{\rho}, S\cup Q_n} ^{ \psi}[[ y_{g+[F:\mb{Q}] +1}, y_{g+[F:\mb{Q}] +2}, y_{g+[F:\mb{Q}]+3}   ]]$$ 
Let us also fix a surjection
$$R_{p} ^{\square, \psi}[[x_1,...,x_g]] \ra R_{\bar{\rho}, S\cup Q_n} ^{\square, \psi} $$
and a lifting
$$\mc{O}[[y_i]] \ra R_{p} ^{\square, \psi}[[x_1,...,x_g]]$$
where we write $\mc{O}[[y_i]]$ for $\mc{O}[[y_1,..., y_{g+[F:\mb{Q}]+3}]]$. We put
$$S_n(K) = R_{\bar{\rho}, S\cup Q_n} ^{\square, \psi} \otimes _{R_{\bar{\rho}, S\cup Q_n} ^{ \psi}} S_{\psi}(KU_{Q_n}(1), \mc{O}) _{\mf m _{Q_n}(1)}$$
which is a $R_{p} ^{\square, \psi}[[x_1,...,x_g]]$-module via the surjection $R_{p} ^{\square, \psi}[[x_1,...,x_g]] \ra R_{\bar{\rho}, S\cup Q_n} ^{\square, \psi}$.

\medskip

\noindent For an open ideal $I \subset \mc{O}_E[[y_i]]$ we define
$$\pi _n(I) = \varinjlim _K S_n(K) \otimes _{\mc{O}_E} \mc{O} _E[[y_i]] / I$$
We remark that for sufficiently large $n$ so that $I$ contains the kernel of 
$$\mc{O}_E[[y_i]] \ra \mc{O}_E[\Delta _{Q_n}][[y_{g+[F:\mb{Q}] +1}, y_{g+[F:\mb{Q}] +2}, y_{g+[F:\mb{Q}] +3}]]$$
$\pi_n(I)$ is an admissible $\prod _{v|p} \GL_2(F_v)$-representation over the finite ring $\mc{O}_{E}[[y_i]]/I$ such that $\pi _n(I) ^K$ is finite free for all sufficiently small compact open subgroups $K \subset \GL_2(F)$. Furthermore
$$\pi _n(I) \otimes _{\mc{O} _E[[y_i]] /I } \mc{O} _E / \varpi _E = C^0 (G(\mb{Q}) \backslash G(\mb{A}_f ^S) / \prod _{v\not \in S} \GL _2(\mc{O}_{F_v}), \mc{O}_E/\varpi _E)$$
is independent of $n$, in particular $\pi _n(I) ^K$ are bounded uniformly in $n$ and we can take the ultraproduct as in \cite{sch}. We fix a non-principal ultrafilter which gives us the localisation map
$$\prod _{n \geq 1 } \mc{O}_E[[y_i]] / I \ra \mc{O}_E[[y_i]]/I$$
We define
$$\pi _{\infty}(I) = \varinjlim _K \left( \prod _{n \geq 1} \pi _n(I) ^K \right) \otimes _{\prod _{n \geq 1} \mc{O} _{E}[[y_i]] / I} \mc{O}_E [[y_i]]/I$$
This is an admissible $\prod _{v|p} \GL_2(F_v)$-representation over $\mc{O}_E[[y_i]]/I$ such that
$$\pi _{\infty}(I)^K = \left( \prod _{n \geq 1} \pi _n(I) ^K \right) \otimes _{\prod _{n \geq 1} \mc{O} _{E}[[y_i]] / I} \mc{O}_E [[y_i]]/I$$
is finite free. We pass to the inverse limit
$$\pi _{\infty} '  = \varprojlim _I \pi _{\infty}(I)$$
and then set
$$\pi _{\infty} = \pi _{\infty} ' \otimes _{\mc{O}_E[[y_i]]} \omega$$
where $\omega$ is the injective hull of $\mc{O}_E / \varpi _E$ as $\mc{O}_E[[y_i]]$-module. This is an admissible $\prod _{v|p} \GL_2(F_v)$-representation over $\mc{O}_{E}[[y_i]]$. Observe that we have an action of $R^{\square, \psi} _{p}[[x_1,...x_g]]$ on all the above objects.

\medskip

\begin{proof}[Proof of Theorem \ref{theo:comp}]
We use patching argument to reduce to the case of mod p representations. Observe that it suffices to show the result for $A = R_{\overline{\rho}_p}[[t_1, \ldots, t_n]]$ for some $n$, $\rho_A$ corresponding to the map $A \rightarrow R_{\overline{\rho}_p}$ given by sending all the $t_i$s to $0$, and $V = B(\rho_A)$.

We globalize our representation $\bar{\rho}_p$. By Corollary A.3 of \cite{gk}, there is a CM field $F$ (potentially larger than $F$ we have started with, but still such that each place $v|p$ of $F^+$ splits in $F$) and an absolutely irreducible representation $\rho: G_{F} \rightarrow \mathrm{GL}_2(E)$ (again we might need to enlarge $E$) which is automorphic and such that $\overline{\rho}|_{G_{F_v}} = \overline{\rho}_p$ for each place $v|p$. We apply the patching construction to $\overline{\rho}$ to get a representation $\pi _{\infty}$ as above. In order to establish the theorem we need to prove that
$$J(\pi _{\infty,v}) = J'(\pi _{\infty, v})$$
where $v$ is our distinguished place over $p$ and we write $\pi_{\infty,v}$ for the $v$-component of $\pi_{\infty}$. As both functors commute with limits, it is enough to prove that 
$$J(\pi_{\infty}(I)_v) = J'(\pi _{\infty}(I)_v)$$
for an open ideal $I \subset \mc{O}[[y_i]]$. By Theorem \ref{theo-lg2} (cf. Theorem 6.2 of \cite{sch}), we know that 
$$\mathrm{Hom}_{G_{F}}(\rho, \hat{H}^1_{GG,E}(K^p)) \cong J'(\pi_n(I)_v) \otimes_{i} \pi_n(I)_{v_i}  \otimes AF_{\ell \neq p}$$
but by Theorem \ref{theo-lg1} (cf. Corollary 5.3.4 in \cite{EKThesis}), we have:
$$\mathrm{Hom}_{G_{F}}(\rho, \hat{H}^1_{GG,E}(K^p)) \cong J(\pi_n(I)_v) \otimes_{i} \pi_n(I)_{v_i}  \otimes AF_{\ell \neq p}$$
Thus both functors $J(-)$ and $J'(-)$ are equal on $\pi _n (I)_v$, hence by taking the ultra-product and passing to the limit, we obtain (cf. proof of Corollary 9.3 of \cite{sch}) that
$$J(\pi _{\infty,v}) = J'(\pi _{\infty,v})$$ 
This finishes the proof.
\end{proof}

Another way to think about this result is that there are three objects: the patched $H^1$ of the Shimura curve, and $J$,$J'$ functors applied to the patched $H^0$ for the group that is compact at $\infty$. Then this theorem shows that either of the last two objects agrees with the first object, and so the last two objects must agree with each other. One benefit of thinking in this manner is that if there is another $J$ functor that satisifies a similar local-global compatibility, then it would also agree with the patched $H^1$, and so with the two already constructed functors.

\section{Locally algebraic vectors}

In this section, we compute the locally algebraic vectors in $J(\rho)$.  The answer is highly reminiscent of the Breuil-Schneider construction for the case of $\mathrm{GL}_n$.  If $\rho : G_{\bb{Q}_p} \rightarrow \mathrm{GL}_2(E)$ is a continuous representation of $G_{\bb{Q}_p}$, then define $BS_{D^\times}(\rho)$ as follows: if $\rho$ is not potentially semistable with distinct Hodge-Tate weights, then $BS_{D^\times}(\rho) = 0$.  Otherwise, associated to $\rho$ are Hodge-Tate weights $w_1 < w_2$ and a Weil-Deligne representation $WD(\rho)$.  If the Frobenius-semisimplification of $WD(\rho)$ is the sum of two characters, then, again, define $BS_{D^\times}(\rho) = 0$.  In the final case, we let $Sm_{\rho}$ be the representation of $D^\times$ associated to $WD(\rho)^{F-ss}$ and $Alg_{\rho}$ be the algebraic representation of $D^\times$ with weights $-w_2$ and $-w_1 - 1$.  Then we may define $BS_{D^\times}(\rho) = Sm_{\rho} \otimes Alg_{\rho}$.

\begin{theo}
We have $J(\rho)^{alg} = BS_{D^\times}(\rho)$.
\end{theo}
The proof will break up into three steps.  First off, we define the potentially semistable deformation rings that we are interested in.  After that we use patching to reduce showing the theorem for all representations to showing the theorem for globally arising representations.  Finally, we show that the theorem is true for globally arising representations.
\begin{proof}
The first part of the proof will break into two cases.  The first case will be when $\rho$ is potentially crystalline, and the second will be when $\rho$ is semistable.  Since every potentially semistable $2$-dimensional representation is either potentially crystalline or a twist of a semistable representation by a finite order character, there is no loss in generality in these assumptions.

Let $\overline{\rho}$ be a representation of $G_{\bb{Q}_p}$ over $k$.  Then, we fix an inertial type $\tau$ and a pair of Hodge-Tate weights $w_1 < w_2$.  There is then a deformation ring $R_{\overline{\rho}}^{\tau, w_1, w_2}/\mathcal{O}_E$ whose points correspond to potentially crystalline lifts $\rho$ of $\overline{\rho}$ such that $\rho$ has Hodge-Tate weights $w_1<w_2$ and the inertial type of $WD(\rho)$ is $\tau$.  Finally, we will assume that $\tau$ is chosen such that $WD(\rho)$ is irreducible for any lift $\rho$.

If one instead chooses $\tau$ to be trivial, then one gets a ring $R_{\overline{\rho}}^{ss, w_1, w_2}$ which parameterizes semistable lifts of $\overline{\rho}$.  If $S = \Spec(R_{\overline{\rho}}^{ss, w_1, w_2})$, there is the locus $S^{N \neq 0}$ which is open, and corresponds to the condition that $\rho$ is not crystalline.  Additionally, there is the closed locus $S'$ where the Weil group representation is of the form $\left(\begin{smallmatrix} \chi & \\ & \chi\epsilon \end{smallmatrix}\right)$.  One knows that $S'$ is the closure of $S^{N \neq 0}$, and we will let $R_{\overline{\rho}}^{ss', w_1, w_2}$ be the quotient of $R_{\overline{\rho}}^{ss, w_1, w_2}$ corresponding to $S'$.  Again, one has that there is a representation $BS_{D^\times}(\rho^{univ})$ over $R_{\overline{\rho}}^{ss', w_1, w_2}$.  An important thing to note here is that there could be representations $\rho$ which are crystalline but still lie in $S'$.  These representations arise from the fact that if $\rho$ is crystalline, and $WD(\rho)=\left(\begin{smallmatrix} \chi & \\ & \chi\epsilon\end{smallmatrix}\right)$, then even though $N = 0$ at $\rho$, it deforms to representations where $N \neq 0$.  However, such representations should not arise from global geometric (or modular) representations, as they are in contradiction with the weight-monodromy conjecture.

\begin{lemm}
Every irreducible component of $\Spec(R_{\overline{\rho}}^{\tau, w_1, w_2})$ and $\Spec(R_{\overline{\rho}}^{ss', w_1, w_2})$ contains an automorphic point.
\end{lemm}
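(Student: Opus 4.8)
The plan is to prove that every irreducible component of $\Spec(R_{\overline{\rho}}^{\tau, w_1, w_2})$ (respectively $\Spec(R_{\overline{\rho}}^{ss', w_1, w_2})$) contains an automorphic point by the standard globalization-plus-patching technique, exactly in the spirit of the arguments of Gee, Kisin, and Emerton--Gee (see \cite{gk}, \cite{ki}). The key dimension-theoretic input is that these potentially semistable (respectively semistable) deformation rings are known to be equidimensional of the expected dimension, with all components of the same dimension, so it suffices to exhibit automorphic points that are Zariski-dense in a set meeting every component; in fact it is enough to show that the support of a suitable patched module is a union of irreducible components and is all of $\Spec$ of the local ring.

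First I would globalize: by Corollary A.3 of \cite{gk} (used already in the proof of Theorem \ref{theo:comp}), choose a CM field $F$ in which every place above $p$ splits, and an automorphic, absolutely irreducible, essentially conjugate self-dual $\overline{\rho}: G_{F} \to \GL_2(k_E)$ with $\overline{\rho}|_{G_{F_v}} \cong \overline{\rho}_p$ for each $v \mid p$, arranged so that the relevant Taylor--Wiles hypotheses hold (enlarging $E$ if necessary). Next I would run the Taylor--Wiles--Kisin patching construction as set up in Section 4, but now with the local deformation condition at the distinguished place $v$ being the potentially crystalline condition of type $\tau$ and Hodge--Tate weights $(w_1,w_2)$ (respectively the semistable, or $S'$, condition), and with the local representation at $v$ being a lattice in the locally algebraic representation $\Sigma(\tau,(w_1,w_2))$ whose $\GL_2(\Z_p)$-socle realizes the inertial type via the inertial local Langlands correspondence. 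This produces a patched module $M_\infty$ over a power series ring over $R_{\overline{\rho}_p}^{\tau, w_1, w_2}\hat\otimes(\text{other local rings})$ which is maximal Cohen--Macaulay, hence its support is a union of irreducible components of $\Spec R_\infty$; a standard support argument (minimal primes, codimension count using that the patched module is nonzero and MCM over an equidimensional ring of the same dimension) shows the support is all of $\Spec R_{\overline{\rho}_p}^{\tau, w_1, w_2}$, and the classical points in the support — which by classicality of locally algebraic automorphic forms of the right weight and type (Breuil--Mézard / classicality theorems) are automorphic — are dense in it. Specializing back, every component of $\Spec(R_{\overline{\rho}}^{\tau, w_1, w_2})$ contains such an automorphic point. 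For $\Spec(R_{\overline{\rho}}^{ss', w_1, w_2})$ one argues identically after replacing the local condition at $v$ by the semistable one; since $S'$ is the closure of the non-crystalline locus $S^{N\neq 0}$, and the patched module over $R^{ss}$ sees both crystalline and Steinberg-type automorphic forms, the support meets every component of $S'$.

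The main obstacle I anticipate is the equidimensionality and the precise identification of components of $R_{\overline{\rho}_p}^{ss', w_1, w_2}$: unlike the potentially crystalline ring with $\tau$ chosen so $WD$ is always irreducible (which is well-behaved and treated by Kisin), the ring cut out by the condition that the Weil representation is $\left(\begin{smallmatrix}\chi&\\&\chi\epsilon\end{smallmatrix}\right)$ has the subtle feature, noted just before the lemma, that it contains crystalline points where $N=0$ but which deform to $N\neq0$; one must be careful that the automorphic points (necessarily satisfying weight--monodromy, hence genuinely Steinberg at $v$) are still dense in \emph{every} component, which requires knowing that $S'$ has no component consisting entirely of such pathological crystalline points. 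This is handled by the fact that $S'$ is by definition the closure of the generically-non-crystalline locus, so its generic points are non-crystalline and the patched module's classical locus is dense there; but making this precise, and checking the requisite properties (reducedness, dimension) of $R_{\overline{\rho}_p}^{ss', w_1, w_2}$, is where the real work lies, and I would lean on the explicit analysis of two-dimensional semistable deformation rings in the literature (e.g.\ the computations underlying the Breuil--Mézard conjecture for $\GL_2(\Q_p)$).
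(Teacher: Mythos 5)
Your overall strategy (globalize, patch, study the support of $M_\infty(\sigma^\circ)$) is the same as the paper's, but there is a genuine gap at the decisive step. You claim that ``a standard support argument (minimal primes, codimension count using that the patched module is nonzero and MCM over an equidimensional ring of the same dimension) shows the support is all of $\Spec R_{\overline{\rho}_p}^{\tau,w_1,w_2}$.'' That is not what maximal Cohen--Macaulayness gives you: a nonzero MCM module over an equidimensional ring has support equal to a \emph{union of} irreducible components, but nothing in that argument forces it to be \emph{all} of them. Identifying which components lie in the support is precisely the content of the Breuil--M\'ezard conjecture, and it is the historically hard point in every automorphy-lifting argument of this shape; a codimension count cannot resolve it. The paper closes this gap by invoking the theorem of \cite{ceggps2} that $M_\infty$ realizes the $p$-adic local Langlands correspondence for $\GL_2(\mathbb{Q}_p)$: this implies that \emph{every} potentially semistable point of the given type and weights lies in the support of $M_\infty(\sigma^\circ)$, so the support is literally all of $\Spec(R_{\overline{\rho}}^{\tau,w_1,w_2})$ (resp.\ $\Spec(R_{\overline{\rho}}^{ss',w_1,w_2})$). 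Combined with the regular sequence $\mathfrak{a}$ and the commutative-algebra lemma (Lemma 3.9 of \cite{Pas}) showing that every component of $\Supp M_\infty(\sigma^\circ)$ meets the automorphic locus $\Supp M_\infty(\sigma^\circ)/\mathfrak{a}$, this yields the lemma. Your version of that last step (density of classical points in the support) is essentially the same idea, but without the $p$-adic Langlands input your argument only shows that \emph{some} components contain automorphic points.

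A secondary remark: your anticipated difficulty about the components of $R_{\overline{\rho}}^{ss',w_1,w_2}$ containing ``pathological'' crystalline points is real, but it is absorbed by the same mechanism --- once the support of $M_\infty(\sigma^\circ)$ is known to be everything, every component of $S'$ is in the support and hence meets the automorphic locus; you do not need a separate analysis of the geometry of the semistable deformation ring.
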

\begin{proof}
Let $\sigma$ be the locally algebraic type corresponding to the potentially semistable representation we are looking at, i.e. $\sigma$ is a locally algebraic representation of $\mathrm{GL}_2(\bb{Z}_p)$.  Adopting the notation of \cite{ceggps1}, there is a module $M_{\infty}(\sigma^\circ)$ over the ring $R_{\infty}$.  One has in addition that there is an ideal $\mathfrak{a}$ that is generated by a regular sequence, and such that any point in the support of $M_{\infty}(\sigma^\circ)/\mathfrak{a}$ corresponds to an automorphic form of the correct inertial type and weight.  Thus, one has that every component in the support of $M_{\infty}(\sigma^\circ)$ contains an automorphic point (see Lemma 3.9 of \cite{Pas} for the short commutative algebra argument).  However, the support of $M_{\infty}(\sigma^{\circ})$ is exactly the set of points that contain the correct inertial type by definition.  In \cite{ceggps2}, they show that $M_{\infty}$ realizes the standard $p$-adic Langlands correspondence, and so one gets that every point corresponding to a potentially semistable representation is in the support of $M_\infty(\sigma^\circ)$.  This shows the lemma.
\end{proof}
The above argument is very similar in spirit to how Emerton shows the Fontaine-Mazur conjecture for $2$-dimensional representations of $G_{\bb{Q}}$ in \cite{em1}.

Now, one considers $\mu$, the smooth representation of $\mathcal{O}_D^\times$ associated to $\tau$ defined by taking some Weil-Deligne representation that has $\tau$ as its inertial type, taking the representation of $D^\times$ associated to said representation and then restricting to $\mathcal{O}_D^\times$.  It is a standard fact that this restriction depends only on $\tau$ and so $\mu$ is well-defined.  As in \cite{ceggps1}, one considers $\Hom_{\mathcal{O}_D^\times}(\mu\otimes Alg_{-w_2, -w_1-1}, J(\pi _\infty)).$  A similar argument as above shows that the support of this is the union of connected components in $\Spec(R^{\tau, w_1, w_2}_{\overline{\rho}}[[y_i]]$ (or $\Spec(R^{ss', w_1, w_2}_{\overline{\rho}}[[y_i]]$).  Granting for the moment that one has the correct locally algebraic vectors at all of the automorphic points, one gets that $\Hom_{\mathcal{O}_D^\times}(\mu\otimes Alg_{-w_2, -w_1-1}, J(\pi _\infty))$ is nonzero over all of $R^{\tau, w_1, w_2}_{\overline{\rho}}[[y_i]]$ or $R^{ss', w_1, w_2}_{\overline{\rho}}[[y_i]]$.  This shows that one has the correct representation (correct locally algebraic vectors) when restricted to $\mathcal{O}_D^\times$.  Additionally, since one can read the central character of $J(\pi_\infty)$ off of $\det(\rho_\infty)$, one gets that, over $R^{\tau, w_1, w_2}_{\overline{\rho}}[[y_i]]$ or $R^{ss', w_1, w_2}_{\overline{\rho}}[[y_i]]$, the locally algebraic vectors are correct when restricted to $\mathcal{O}_D^\times\bb{Q}_p^\times$. We want to promote that to $D^{\times}$. Consider the following lemma:

\begin{lemm}\label{SmoothRepLemma} Let $A$ be an $\overline{E}$-algebra that is a domain.
\begin{enumerate}
\item Let $\pi: D^\times \rightarrow \mathrm{GL}_n(A)$ be a smooth irreducible representation that is constant on $\mathcal{O}_D^\times\bb{Q}_p^\times$.  Then $\pi$ is constant.
\item Similarly, let $WD$ be a $2$-dimensional irreducible Weil-Deligne representation over $A$.  If the inertial type and determinant are constant, then $WD$ is constant.
\end{enumerate}
\end{lemm}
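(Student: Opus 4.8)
The plan is to treat both parts by one mechanism. In each, the ambient group is generated by a \emph{normal} subgroup $H$ on which the representation is constant by hypothesis, together with one extra element $g_0$; the quotient $G/H$ is $\bb{Z}/2\bb{Z}$ in part (1), where $H=\mc{O}_D^{\times}\bb{Q}_p^{\times}$, $g_0=\varpi_D$ and $\varpi_D^2\in H$, and it is $\bb{Z}$ in part (2), where $H=I_{\bb{Q}_p}$ and $g_0=\phi$ is a Frobenius lift, so $W_{\bb{Q}_p}=I_{\bb{Q}_p}\rtimes\langle\phi\rangle$. First I would pass to $K=\mathrm{Frac}(A)$ so as to use Schur's lemma, and record the one elementary fact used throughout: if $x\in K$ satisfies $x^m\in\overline{E}^{\times}$ then $x\in\overline{E}^{\times}$, since $t^m-x^m$ splits over the algebraically closed field $\overline{E}$ and $K$ is a domain. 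I would also reduce to the case where the representation is absolutely irreducible: in part (1) this is automatic once $\pi|_H$ is constant (the Schur multiplier $H^2(\bb{Z}/2\bb{Z},\overline{E}^{\times})$ vanishes, so $\pi|_H$ is multiplicity free and its commutant computes that of $\pi$); in part (2) it amounts---given irreducibility and that the inertial type is defined over $\overline{E}$---to the inertial type being non-scalar, which is the only case relevant to us (a scalar inertial type would make the associated Weil--Deligne representations reducible over $\overline{\bb{Q}}_p$).

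For part (1): $H=\mc{O}_D^{\times}\bb{Q}_p^{\times}$ is normal of index $2$ in $D^{\times}$, with $\varpi_D$ representing the nontrivial coset. Since $\pi|_H$ is constant and $H$ is compact modulo its centre, $\pi|_H$ is a multiplicity-free sum of absolutely irreducible $\overline{E}[H]$-modules, permuted by conjugation by $\varpi_D$; as $\pi$ is irreducible there is a single $\varpi_D$-orbit, of size $1$ or $2$. If the orbit has size $2$, then $\pi\cong\Ind_H^{D^{\times}}\sigma$ for an $\overline{E}[H]$-module $\sigma$, and induction of a constant representation is constant. If the orbit has size $1$, write $\tau=\pi|_H$; it is absolutely irreducible and $\tau\cong\tau\circ\Ad(\varpi_D)$, so I fix an intertwiner $S\in\GL_n(\overline{E})$. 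Then $S^{-1}\pi(\varpi_D)$ commutes with $\tau(H)$, hence $\pi(\varpi_D)=\lambda S$ with $\lambda\in K^{\times}$ by Schur. Now $\pi(\varpi_D)^2=\pi(\varpi_D^2)=\tau(\varpi_D^2)\in\GL_n(\overline{E})$, while $S^2$ and $\tau(\varpi_D^2)$ both intertwine $\tau$ with $\tau\circ\Ad(\varpi_D^2)$, so $S^2\in\overline{E}^{\times}\cdot\tau(\varpi_D^2)$; comparing the two expressions for $\pi(\varpi_D)^2$ gives $\lambda^2\in\overline{E}^{\times}$, hence $\lambda\in\overline{E}^{\times}$, so $\pi(\varpi_D)\in\GL_n(\overline{E})$ and $\pi$ is constant.

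For part (2): an absolutely irreducible $2$-dimensional Weil--Deligne representation has $N=0$, so I have $r\colon W_{\bb{Q}_p}\to\GL_2(A)$ with $\tau:=r|_{I_{\bb{Q}_p}}$ constant and non-scalar; since $r(\phi)$ intertwines $\tau$ with $\tau\circ\Ad(\phi)$, already $\tau\cong\tau\circ\Ad(\phi)$ over $\overline{E}$. Over $\overline{E}$ either (a) $\tau$ is irreducible, in which case the argument of part (1) applies verbatim with $\phi$ for $\varpi_D$, using $\det r(\phi)=\lambda^2\det S\in\overline{E}^{\times}$ (so $\lambda\in\overline{E}^{\times}$) in place of the relation on $\varpi_D^2$; or (b) $\tau=\chi_1\oplus\chi_2$ with $\chi_1\neq\chi_2$, the only remaining non-scalar option. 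In case (b), $\Ad(\phi)$ must swap $\chi_1$ and $\chi_2$---otherwise the $\chi_i$-eigenlines are $W_{\bb{Q}_p}$-stable, contradicting irreducibility---so $W'=I_{\bb{Q}_p}\rtimes\langle\phi^2\rangle$ has index $2$, preserves the $\chi_1$-line, and $r\cong\Ind_{W'}^{W_{\bb{Q}_p}}\psi$ with $\psi|_{I_{\bb{Q}_p}}=\chi_1$ constant. Restricting this induced representation to $W'$ shows $r(\phi^2)=\psi(\phi^2)\cdot\mathrm{id}_2$, whence $\psi(\phi^2)^2=\det r(\phi^2)\in\overline{E}^{\times}$ and so $\psi(\phi^2)\in\overline{E}^{\times}$; thus $\psi$, and therefore $r$, is constant.

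The hard part is not the Clifford theory or Schur's lemma---those are routine---but rather making sure that the single extra datum reconstructing the representation from its constant restriction (the scalar $\lambda$, resp. the value $\psi(\phi^2)$) is forced into $\overline{E}$: this comes from pairing the constancy of $\pi(\varpi_D^2)$ (resp. of $\det r$) with the elementary domain fact above. A secondary point needing care is the interpretation of ``irreducible'' for the Weil--Deligne representation in part (2): the statement genuinely needs a non-scalar inertial type, and one should record that this holds in every application (indeed, it is forced as soon as the Weil--Deligne representation stays irreducible over $\overline{\bb{Q}}_p$).
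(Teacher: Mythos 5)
Your proof is correct. Part (1) follows essentially the same Clifford-theoretic dichotomy as the paper --- the restriction to the index-two normal subgroup $\mathcal{O}_D^\times\mathbb{Q}_p^\times$ either splits into two conjugate constituents, in which case $\pi$ is induced from a constant representation, or stays irreducible --- but you treat the non-induced case differently: the paper extends $\pi'$ to a constant irreducible representation of $D^\times$ over $\overline{E}$ and identifies $\pi$ with one of the two summands of $\mathrm{Ind}_{\mathcal{O}_D^\times\mathbb{Q}_p^\times}^{D^\times}\pi'\otimes_{\overline{E}}A$ via Frobenius reciprocity, whereas you pin down the single extension datum $\pi(\varpi_D)=\lambda S$ and force $\lambda\in\overline{E}^\times$ from the constancy of $\pi(\varpi_D^2)$; the two arguments are equivalent in content. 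In part (2) your route is genuinely different and in fact more robust. The paper's proof starts from the assertion that an irreducible two-dimensional Weil representation is induced from a character $\psi$ of a quadratic extension $L/\mathbb{Q}_p$ and then checks $\psi(\varpi_L)\in\overline{E}^\times$ from the determinant; that classification fails for primitive representations when $p=2$, and in any case silently uses more structure than needed. Your dichotomy --- inertial restriction irreducible over $\overline{E}$, versus a sum of two distinct characters necessarily swapped by Frobenius --- covers all cases uniformly, handling the first branch by the same intertwiner-plus-scalar computation as in part (1) (with $\det r(\phi)$ supplying the constraint) and the second by induction from the unramified quadratic subgroup $I_{\mathbb{Q}_p}\rtimes\langle\phi^2\rangle$. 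Finally, your remark that the statement requires the inertial type to be non-scalar (equivalently, that \emph{irreducible} must be read as \emph{absolutely irreducible}: an unramified twist with constant determinant but non-constant trace of Frobenius gives a counterexample otherwise) is a genuine point the paper leaves implicit; it is harmless in the application, since on the components $X_i$ the Weil--Deligne representation is irreducible at every closed point by the choice of $\tau$.
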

Here, $\pi$ being constant means that there is a representation $\pi'$ over $\overline{E}$ such that $\pi = \pi' \otimes_{\overline{E}} A$.
\begin{proof}
The first part breaks into two cases.  Let $\pi'$ be a representation of $\mathcal{O}_D^\times \bb{Q}_p^\times$ over $\overline{E}$ such that $\pi = \pi' \otimes_{\overline{E}} A$.  The two cases are whether $\pi'$ is irreducible or not.

If $\pi' = \pi_1 \oplus \pi_2$, then Frobenius reciprocity tells you that $\mathrm{Ind}_{\mathcal{O}_D^\times \bb{Q}_p^\times}^{D^\times} \pi_1 \cong \mathrm{Ind}_{\mathcal{O}_D^\times \bb{Q}_p^\times}^{D^\times} \pi_2$ and that $\mathrm{Hom}_{D^\times}(\mathrm{Ind}_{\mathcal{O}_D^\times \bb{Q}_p^\times}^{D^\times} \pi_1 \otimes_{\overline{E}} A, \pi)$ is nontrivial and hence contains an isomorphism.  Then one is done, as the representation $\mathrm{Ind}_{\mathcal{O}_D^\times \bb{Q}_p^\times}^{D^\times} \pi_1 \otimes_{\overline{E}} A$ is visibly constant.

Now, assume that $\pi'$ is irreducible.  Since $\pi'$ admits an extension to an irreducible representation of $D^\times$, one has that $\mathrm{Ind}_{\mathcal{O}_D^\times \bb{Q}_p^\times}^{D^\times} \pi'$ is the sum of two irreducible representations; call them $\pi'_1$ and $\pi'_2$.  Then one gets that either $\mathrm{Hom}(\pi'_1\otimes_{\overline{E}} A, \pi)$ or $\mathrm{Hom}(\pi'_2\otimes_{\overline{E}} A, \pi)$ is nonzero and hence an isomorphism.  Again, one has that $\pi_i \otimes_{\overline{E}} A$ is a constant representation and hence so is $\pi$.

The second part is simpler.  Because of the irreducibility of $WD$, one knows that this is the induction of a character $\psi$ of some extension $L/\bb{Q}_p$.  It then suffices to show that this character is constant.  One knows that $\psi$ is constant on $\mathcal{O}_L^\times$ because the inertial type is constant there.  It then reduces to showing that $\psi(\varpi_L) \in \overline{E}^\times$.  Because $A$ is a domain, it is in fact necessary only to show that $\psi(\varpi_L) \in \overline{E}^\times$.

If $L$ is unramified, then one may choose the uniformizer of $\mathcal{O}_L$ to be $p$.  In this case, one has that $\psi(p)^2 = \mathrm{det}(\mathrm{Ind}_L^{\bb{Q}_p}\psi)(p^2) = \mathrm{det}(WD)(p^2)$.  If $L/\bb{Q}_p$ is ramified, then one may choose $\varpi_L$ such that $\varpi_L^2 \in \bb{Q}_p$.  Then one has that $\psi(\varpi_L)^2 = \psi(-1)\psi(\varpi_L)\psi(-\varpi_L) = \psi(-1)\mathrm{det}(\mathrm{Ind}_L^{\bb{Q}_p}(\psi))(N_{\bb{Q}_p}^L(\varpi_L)) = \psi(-1)\mathrm{det}(WD)(N_{\bb{Q}_p}^L(\varpi_L))$, which again is constant.
\end{proof}

Let $\{X_i\} _i$ be the geometric components of $\Spec(R^{\tau, w_1, w_2}_{\overline{\rho}}[[y_i]])$.  Over each $X_i$, there is a Weil-Deligne representation $WD_{X_i}$ given by a recipie of Fontaine.  Because the inertial type is constant, one has that $WD_{X_i}$ is of the form $\chi \otimes \psi$ where $\chi$ is an unramified character and $\psi$ has constant determinant.  But then by the second part of lemma \ref{SmoothRepLemma}, $\psi$ is constant, and so there is a smooth representation of $D^\times$ associated to $\psi$.  Then, one can tensor it by $\chi \circ \det$ to get a smooth representation $Sm_{X_i}$ such that for each closed point $p$ of $X_i$, $Sm_{X_i}(p)$ is the $D^\times$-representation associated to $WD_{X_i}$ under the local Langlands and Jacquet-Langlands correspondence.

Now, we claim that $Sm_{X_i} \otimes Alg_{-w_2, -w_1-1}$ is equal to the representation $J(\pi_\infty|_{X_i})^{alg}$.  Indeed, one has that this is true when restricted to $\mathcal{O}_D^\times \bb{Q}_p^\times$.  Again, we will use the fact that this is true at all automorphic points as well, to show that this equality is true at at least one closed point.  Thus, by the first part of lemma \ref{SmoothRepLemma} and the same manipulations as in the previous paragraph, one gets that these two representations are the tensor product of a fixed algebraic representation, a fixed unramified character, and a representation that is constant along $\mathcal{O}_D^\times \bb{Q}_p^\times$.  Hence, by the first part of lemma \ref{SmoothRepLemma}, both representations are the tensor product of a constant representation and a fixed unramified character.  But since these representations agree at a closed point (namely, an automorphic point which we know to exist), they must be the same everywhere.

We thus are reduced to showing that, if $\rho: G_{F'} \rightarrow \mathrm{GL}_2(E)$ is a promodular representation that satisfies $\rho^c = \chi \otimes \rho^\vee$, then $BS_{D^\times}(\rho|_{G_{F_{v'}}}) = J(B(\rho|_{G_{F_{v'}}}))^{alg}$.  This is in turn reduced to calculating $\hat{H}^1_{E, GG}(K^p)^{alg}$ for $K^p \subset GG(\mathbb{A}_{f}^{p})$.  Now, Emerton's paper \cite{em2} gives a technique for doing so.  Let $W$ be an algebraic representation of $GG(\bb{Q}_p)$.  Associated to $W$ is a local system $\mathcal{V}_{W^\vee} / Sh_{K_pK^p}$.  Define $H^i(W^\vee, K^p) = \displaystyle{\lim_{\longrightarrow \atop K_p}} H^i_{\acute{e}t}(Sh_{K_pK^p}, \mathcal{V}_{W^\vee})$.  Then, \cite{em2} gives a spectral sequence $\Ext^i_{\mathfrak{gg}}(W, \hat{H}^j_{E, GG}(K^p)^{an}) \Rightarrow H^{i+j}(W^\vee, K^p)$.

Analogous to the $\mathrm{GL}_2$ case, there is a decomposition of $\mathfrak{gg} = \mathfrak{z} \oplus \mathfrak{sd} \oplus \bigoplus_{i = 1}^{n} \mathfrak{sl}_{2, v_i} := \mathfrak{z} \oplus \mathfrak{gg}^{ss}$, where $\mathfrak{z}$ is the center of the Lie algebra, $\mathfrak{sd}$ is the Lie algebra of $SD^\times = \{d \in D^\times | \nu(d) = 1\}$, and $\mathfrak{sl}_{2, v_i}$ is the copy of $\mathfrak{sl}_2$ corresponding to the $SL_2(F_v)$ inside of $GG(\bb{Q}_p)$.  After possibly replacing $E$ with a larger field, one has that $\mathfrak{sd}_E \cong \mathfrak{sl}_{2, E}$, and since all of our representations are $E$ linear, then one gets that $\mathfrak{gg} \cong \mathfrak{z} \oplus \mathfrak{sl}_2^{n+1}$ after base changing to $E$.

Because the maximal $\bb{R}$-split central torus of $GG$ is $\bb{Q}$-split, one has that, for $Z \subset Z(GG(\bb{Q}_p))$ sufficiently small, $\hat{H}^0_{E,GG}(K^p) \cong \mathcal{C}^0(G, E)^r$ as representations of $Z$ for some natural number $r$.  Moreover, the action of the semisimple part of $GG(\bb{Q}_p)$ on $\hat{H}^0_{E, GG}(K^p)$ is trivial, so one gets that $\hat{H}^0_{E, GG}(K^p)^{an} \cong \mathcal{C}^0(Z, E)^{an} \boxtimes 1$ as representations of $\mathfrak{gg} = \mathfrak{z} \oplus \mathfrak{gg}^{ss}$.  Since $\mathfrak{gg}^{ss}$ is geometrically the sum of copies of $\mathfrak{sl}_2$, a Kunneth formula says that $H^i(\mathfrak{gg}^{ss}, W^{\vee}) := \Ext^i_{\mathfrak{gg}^{ss}}(W, 1)$ is concentrated in degrees $0, 3, \ldots, 3(n+1)$ (as the cohomology of $\mathfrak{sl}_2$ is concentrated in degrees $0$ and $3$).  Additionally, a Kunneth formula also says that $\Ext^i_{\mathfrak{gg}}(W, \hat{H}^0_{E, GG}(K^p)^{an}) = \oplus_{a+b = i} \Ext^a_{\mathfrak{z}}(\chi, (\mathcal{C}^0(Z, E)^{an})^r) \oplus H^b(\mathfrak{gg}^{ss}, W^{\vee})$.  However, one gets that the only $\mathfrak{z}$ term that contributes is the degree $0$ part due to the freeness of the module, and using the aforementioned results about Lie algebra cohomology, one sees that the $\Ext^1(H^0)$ and the $\Ext^2(H^0)$ terms vanish, so one gets an isomorphism $H^1(W^\vee) \cong \Hom_{\mathfrak{gg}}(W, \hat{H}^1_{E, GG}(K^p))$.  But that in turn gives an isomorphism $\hat{H}^1_{E, GG}(K^p)^{W-alg} \cong W \otimes H^1(W^\vee, K^p)$. 

Now, we can use classical local-global compatibility results, which describe $H^1(W^\vee, K^p) = \bigoplus_{\rho} \chi_{\rho} \otimes JL(WD(\rho|_{G_{F_{v'}}})) \otimes \bigotimes \pi_{LL}(\rho|_{G_{F_{v_i}}})$, where the sum is over all $\rho$ that are modular of weight $W^{\vee}$ and tame level $K^p$.  Thus, one has that, for $\rho$ modular, $J(\rho|_{G_{F_{v'}}})^{alg} = BS_{D^\times}(\rho|_{G_{F_{v'}}})$, which is what we needed.  
\end{proof}

\end{document}